\def\SIAM{0}
\def\TRUE{1}
\def\FALSE{0}
\numberwithin{equation}{section}
\newcommand*{\tran}{\mathsf{T}}
\newtheorem{theorem}{Theorem}[section]
\newtheorem{lemma}[theorem]{Lemma}
\newtheorem{corollary}[theorem]{Corollary}
\theoremstyle{definition}
\theoremstyle{remark}\newtheorem{remark}[theorem]{Remark}
\theoremstyle{definition}\newtheorem{example}{Example}[section]
\theoremstyle{definition}\newtheorem{prob}{Problem}[section]
\newtheorem{assumption}{Assumption}
\newcommand{\ud}{\,\mathrm{d}}
\newcommand{\bx}{\bm{x}}
\newcommand{\bomega}{\bm{\omega}}
\newcommand{\bOmega}{\bm{\Omega}}
\newcommand{\veps}{\varepsilon}
\title[Uniform convergence of DG scheme for radiative transfer]{Uniform convergence of an upwind discontinuous Galerkin method for solving scaled discrete-ordinate radiative transfer equations with isotropic scattering kernel}
\author[Q.~Sheng]{Qiwei Sheng}
\address{Department of Mathematics, California State Univeristy, Bakersfield, CA 93311}
\email{qsheng@csub.edu}
\author[C.~Hauck]{Cory D.~Hauck}
\address{Computational and Applied Mathematics Group, Oak Ridge National Laboratory, Oak Ridge, TN 37831}
\email{hauckc@ornl.gov}
\thanks{This material was based, in part, upon work supported by the DOE Office of Advanced Scientific Computing Research and by the National Science Foundation under Grant No. 1217170.  ORNL is operated by UT-Battelle, LLC., for the U.S. Department of Energy under Contract DE-AC05-00OR22725.  The United States Government retains and the publisher, by accepting the article for publication, acknowledges that the United States Government retains a non-exclusive, paid-up, irrevocable, world-wide license to publish or reproduce the published form of this manuscript, or allow others to do so, for the United States Government purposes. The Department of Energy will provide public access to these results of federally sponsored research in accordance with the DOE Public Access Plan (\texttt{http://energy.gov/downloads/doe-public-access-plan}). }
\title{Uniform convergence of an upwind discontinuous Galerkin method for solving scaled discrete-ordinate radiative transfer equations with isotropic scattering kernel
	\thanks{This material was based, in part, upon work supported by the DOE Office of Advanced Scientific Computing Research and by the National Science Foundation under Grant No. 1217170.  ORNL is operated by UT-Battelle, LLC., for the U.S. Department of Energy under Contract DE-AC05-00OR22725.  The United States Government retains and the publisher, by accepting the article for publication, acknowledges that the United States Government retains a non-exclusive, paid-up, irrevocable, world-wide license to publish or reproduce the published form of this manuscript, or allow others to do so, for the United States Government purposes. The Department of Energy will provide public access to these results of federally sponsored research in accordance with the DOE Public Access Plan (\texttt{http://energy.gov/downloads/doe-public-access-plan}). }
}
\author{Qiwei Sheng\thanks{Department of Mathematics, California State University, Bakersfield, Bakersfield, CA 93311 (\email{qsheng@csub.edu})}
        \and
        Cory D.~Hauck\thanks{Computational and Applied Mathematics Group, Oak Ridge National Laboratory, Oak Ridge, TN 37831, and Department of Mathematics, University of Tennessee, Knoxville, TN 37996
 (\email{hauckc@ornl.gov})}
        }
\begin{document}
\maketitle


\else
\begin{document}
\fi

\begin{abstract}
We present an error analysis for the discontinuous Galerkin method applied to the  discrete-ordinate discretization of the steady-state radiative transfer equation. Under some mild assumptions, we show that the DG method converges uniformly with respect to a scaling parameter $\varepsilon$ which characterizes the strength of scattering in the system.  However, the rate is not optimal and can be polluted by the presence of boundary layers.  In one-dimensional slab geometries, we demonstrate optimal convergence when boundary layers are not present and analyze a simple strategy for balance interior and boundary layer errors.  Some numerical tests are also provided in this reduced setting. 
\end{abstract}

\ifx\SIAM\FALSE
\subjclass[2010]{65N12, 65N30, 35B40, 35B45, 35L40}
\keywords{radiative transfer equation, asymptotic preserving, discrete-ordinate, discontinuous Galerkin, convergence analysis}
\maketitle
\else
\begin{keywords}radiative transfer equation, asymptotic preserving, discrete-ordinate, discontinuous Galerkin, convergence analysis\end{keywords}

\begin{AMS}65N12, 65N30, 35B40, 35B45, 35L40\end{AMS}

\fi

\section{Introduction}

The radiative transfer equation (RTE) is a kinetic equation that describes the scattering and absorption of radiation through a material medium.  It plays an important role in a wide range of applications such as astrophysics \cite{Pe2004}, atmosphere and ocean modeling \cite{CY2014,TS1999, ZTB}, heat transfer \cite{Modest}, and neutron transport and nuclear physics \cite{CZ1967, DM1978}.

Given a positive scalar parameter $\varepsilon \ll 1$, a highly diffusive medium is characterized by a large ($O(\veps^{-1})$) scattering cross-section, a small ($O(\veps)$) absorption cross-section, and a small  ($O(\veps)$) volumetric source.  For such media, the RTE can be approximated in the interior of the spatial domain, i.e., away from boundaries and discontinuous material interfaces, by a much simpler diffusion equation.   In particular, the RTE solution converges in the limit as $\veps \to 0$ to the solution of a diffusion equation that is independent of $\veps$, with boundary conditions that are determined by the solution of a half-space problem \cite{habetler1975uniform,LarsenKeller,bensoussan1979boundary}.

In many applications, material cross-sections can vary significantly in space and even in time so that both diffusive and non-diffusive regions coexist.  In such situations, kinetic models like the RTE are necessary for accurate simulations, but traditional discretization approaches for the RTE may not be efficient in diffusive regions.  Indeed, local truncation errors may scale like $h^p / \varepsilon$, where $h>0$ is the spatial mesh size and $p>0$ is an integer related to the formal  order of the method \cite{jin2010asymptotic,lowrie2002methods}.   Thus for fixed $h$, the accuracy of the method degrades dramatically as $\veps \to 0$.


Discontinuous Galerkin (DG) methods were first introduced for the purpose of simulating radiation transport equations \cite{reed1973triangular} and are now commonly used in combination with discrete-ordinate angular discretizations of the RTE \cite{LM2010}.  DG methods for the RTE without scattering were first analyzed in a rigorous fashion in \cite{LaSaint1974Finite}.  Later a complete space-angle convergence analysis for problems with scattering was conducted in \cite{HHE2010}.  While the analysis is \cite{reed1973triangular, HHE2010} is valid for fixed $\veps = O(1)$, it does not address the behavior of the method in multi-scale settings, when $\veps$ varies.  Indeed, a major benefit of DG methods is that they are accurate in the diffusion limit (when $\veps \to 0$), assuming a sufficiently rich approximation space.  Such behavior was first investigated in \cite{LM1989}, where it was shown that linear DG elements are sufficient to capture the diffusion limit for discrete-ordinate systems in one-dimensional slab geometries.  These results in \cite{LM1989} were later extended to multi-dimensional settings in \cite{Adams2001}, where it was shown that the DG approximation space must support global linear functions in order to capture the diffusion limit correctly.  This requirement translates to local $P_1$ approximations for triangular elements and local $Q_1$ approximations for rectangular elements.   The results in \cite{Adams2001} were later re-examined and formalized in a functional analysis framework in \cite{GK2010}, where the behavior of limiting discretization was explored in detail, including the presence of boundary layers.   Together, conventional error bounds and asymptotic results can be combined to construct uniform errors estimates \cite{jin2010asymptotic}.  However, such estimates for the RTE will suffer from a factor of two reduction in order.

In this paper, we investigate the convergence of DG discretizations of the discrete-ordinate RTE with variable $\veps$, and we derive error estimates that imply uniform convergence for the upwind DG scheme in the case  of isotropic (constant-in-angle) boundary conditions. A consequence of this result is that the method is necessarily accurate in the diffusion limit.  In general, the uniform errors obtained come at a price:  whereas the error obtained in \cite{HHE2010} for an order $k$ method with fixed $\veps$ is $O(h^{k+1/2})$, here we obtain a uniform $O(h^{k})$ bound across all  $\veps \in (0,1]$.  This bound improves upon the strategy in \cite{jin2010asymptotic} (which in general leads to $O(h^{k/2 + 1/4})$ bounds) whenever $k \geq1$.  However, a uniform $O(h^{k+1})$ estimate can be obtained in one-dimensional slab geometries.  When the incoming boundary condition is not isotropic, a boundary layer effect occurs.   In an approach similar to the one taken in \cite{M2013}, we balance the interior discretization error with the error introduced by the unresolved boundary layer.  By introducing an auxiliary problem, we can still obtain a uniform (although reduced) error estimate, at least in the slab geometry setting.  

The rest of this article is organized as follows. In \Cref{sec:settings}, the radiative transfer equation and its discrete-ordinate discretization are introduced, and the notations used in the remainder of the paper are set.  A priori estimates regarding the solutions of the discrete-ordinate equation are also presented.  In \Cref{sec:asym_error_analysis}, uniform convergence and error estimates of the discontinuous Galerkin method for the discrete-ordinate equation are established.  In \Cref{sec:1d_AP_error_analysis}, a simple strategy for handling anisotropic incoming boundary conditions in one-dimensional slab geometries is proposed and analyzed. Numerical experiments are presented in \Cref{sec:num_results}, and concluding remarks are given in \Cref{sec:conclusion}.

\section{Problem setting}\label{sec:settings}
Throughout the paper, the symbol $\lesssim$ abbreviates $a\le Cb$ for any two real quantities $a$ and $b$ with $C$ a positive nonessential constant independent of the finite element mesh size, which may take different values at different appearances. 
We also adopt the conventional notation $H^r(D)$ to indicate the Sobolev spaces on (possibly lower-dimensional) subdomain $D\subset X$ with the norm $\|\cdot\|_{r,D}$. Clearly, we have $H^0(D)=L^2(D)$ whose norm are denoted by $\|\cdot\|_D$. Define $H^1_0(X)=\{v\in H^1(X):v|_{\partial X}=0\}$. 
\subsection{The radiative transfer equation}
Let $X$ be a bounded domain in $\mathbb{R}^d$ with piecewise smooth boundary $\partial X$. For real physical problems, $d=3$.  However in certain geometries, reduced equations in one or two-dimensional can be derived \cite{LM1984}.
At each point $\bm{x}\in \partial X$ where the boundary is smooth, let $\bm{n}(\bm{x})$ be the unit normal to $\partial X$ that points outward from $X$.
Let  $\mathbb{S}$ be the the unit sphere in $\mathbb{R}^d$, and define the inflow and outflow boundary, respectively, by
\begin{equation}
  \Gamma_-=\{(\bm{x},{\bm{\omega}}) \in \partial X \times \mathbb{S}
  \colon \bm{\omega}\cdot \bm{n}(\bm{x}) <0\}, \quad \Gamma_+=\{(\bm{x},{\bm{\omega}}) \in \partial X \times \mathbb{S}
  \colon \bm{\omega}\cdot \bm{n}(\bm{x}) >0 \}.
\end{equation}

We consider the following scaled version of the steady-state RTE for the unknown $u = u(\bm{x},\bomega)$:
\begin{subequations}\label{eq:rte_scale_all}
\begin{alignat}{2}
 \bomega\cdot\nabla u+\frac{\sigma_{\mathrm{t}}}{\varepsilon}u
 &=\left(\frac{\sigma_{\mathrm{t}}}{\varepsilon}-\varepsilon\sigma_{\mathrm{a}}\right)\int_{\mathbb{S}}u\ud\breve{\bomega}+\varepsilon f, \quad & (\bm{x},{\bm{\omega}}) &\in X \times \mathbb{S} ,\label{eq:rte_scale}\\
 u&=\alpha, & (\bm{x},{\bm{\omega}}) &\in \Gamma_-,  \label{eq:rte_scale_bdry}
\end{alignat}
\end{subequations}
where $\sigma_{\mathrm{a}} = \sigma_{\mathrm{a}}(\bm{x})$ is the absorption cross section; $\sigma_{\mathrm{t}}=\sigma_{\mathrm{t}}(\bm{x})$ is the total cross section; $f = f(\bm{x})$ is a source that is assumed isotropic (i.e., independent of $\bomega$). Here $\alpha = \alpha(\bm{x},\bomega)$, and $\ud\breve{\bomega}$ is the normalized measure on $\mathbb{S}$, i.e., $\int_\mathbb{S} \ud\breve{\bomega}=1$.

We assume that
\begin{align}
  &\sigma_{\mathrm{t}},\sigma_{\mathrm{a}}\in L^\infty(X),\quad  \sigma_{\mathrm{t}}(\bx)-\varepsilon^2\sigma_{\mathrm{a}}(\bx) > 0  ~\text{a.e.}~ \bx \in X,\label{as:rte_scale_1}\\
  &\sigma_{\mathrm{a}} \ge \sigma_{\mathrm{a}}^{\mathrm{min}} \text{ in } X \text{   for a constant } \sigma_{\mathrm{a}}^{\mathrm{min}} >0,\label{as:rte_scale_1_0}\\
  &f(\bm{x})\in L^2(X) \text{ and } \alpha(\bm{x},\bomega)\in L^2(\Gamma_-).\label{as:rte_scale_1_1}
\end{align}
The quantity $\sigma_{\mathrm{t}}(\bx)-\varepsilon^2\sigma_{\mathrm{a}}(\bx)$ is the (non-dimensional) scattering cross-section. The condition \eqref{as:rte_scale_1_0}, while not strictly necessary, is often used in a priori estimates.

It is shown in \cite{agoshkov1998boundary} that under assumptions \cref{as:rte_scale_1,as:rte_scale_1_0,as:rte_scale_1_1}, the problem \eqref{eq:rte_scale_all} has a unique solution $u$ in the space
\begin{equation}
H^1_2(X\times\mathbb{S})=\{v \in L^2(X\times \mathbb{S})\colon
{\bomega}\cdot \nabla v \in L^2(X\times \mathbb{S})\}.
\end{equation}
The solution $u$ of \eqref{eq:rte_scale_all} depends on the parameter $\varepsilon>0$, which characterizes the relative strength of scattering in the system.  However, we suppress this dependence in our notation.

\subsection{Discrete-ordinate method}\label{subsec:DO}
The integral on the right-hand side of \eqref{eq:rte_scale} can be approximated by a numerical quadrature: Given $v \in C(\mathbb{S})$,
\begin{equation}\label{eq:quadrature}
 \int_{\mathbb{S}}v(\breve{\bomega})\ud\breve{\bomega}\approx \sum_{l=1}^L w_l v(\bomega_l).
\end{equation}
We assume that the ordinates $\{ \bomega_l \}$  and weights $\{ w_l \}$ are chosen such that
\begin{equation}
 \sum_{l=1}^L w_l=1,\quad \sum_{l=1}^L w_l \bomega_l=0, \text{ and } -\bomega_j\in\{\bomega_l\} \text{ whenever } \bomega_j\in\{\bomega_l\}.
\end{equation}

Discretization of \eqref{eq:rte_scale_all} in $\bomega$ using the quadrature formula \eqref{eq:quadrature} yields the discrete-ordinate equations:
\begin{subequations}\label{eq:rte_scale_DO_all}
\begin{align}\label{eq:rte_scale_DO}
 \bomega_l\cdot\nabla u^l +\frac{\sigma_{\mathrm{t}}}{\varepsilon}u^l &=\left(\frac{\sigma_{\mathrm{t}}}{\varepsilon}-\varepsilon\sigma_{\mathrm{a}}\right)\sum_{i=1}^L w_i u^i+ \varepsilon f^l,\\
 u^l&=\alpha^l,\quad \text{ on } \partial X^l_-, \quad1\le l\le L,\label{eq:rte_scale_DO_bdry}
\end{align}
\end{subequations}
where $ \alpha^l=\alpha(\bm{x},\bomega_l)$, $f^l \equiv f(\bm{x})$, and $u^l=u^l(\bm{x})$ is an approximation of $u(\bm{x},\bomega_l)$. Since $f$ is assumed isotropic, we often drop the superscript from $f^l$.
Here and below, we use the notation
\begin{equation}\label{eq:disc_bndry}
\partial X_-^l:=\{\bm{x}\in\partial X: \bomega_l\cdot\bm{n}(\bm{x})<0\},
\qquad 
\partial X_+^l:=\{\bm{x}\in\partial X: \bomega_l\cdot\bm{n}(\bm{x})>0\}, 
\end{equation}
and
\begin{equation}
	\partial X_- = \partial X_-^1\times \partial X_-^2\times \cdots \times \partial X_-^L, \quad \partial X_+ = \partial X_+^1\times \partial X_+^2\times \cdots \times \partial X_+^L.
\end{equation}

We now introduce some additional notation that will be used in the paper.  Let
\begin{alignat}{2}
 \bm{u}&=\begin{bmatrix} u^1 & u^2 & \cdots & u^L
        \end{bmatrix}^{\tran},
        \quad
  &\bm{\alpha}&=\begin{bmatrix}\alpha^1 & \alpha^2 & \cdots & \alpha^L
        \end{bmatrix}^{\tran}, \\
 \bOmega&=\begin{bmatrix}\bomega_1&\bomega_2&\cdots&\bomega_L
         \end{bmatrix}^{\tran},
         \quad
  &\bm{f}&=\begin{bmatrix}f &f&\cdots&f
  \end{bmatrix}^{\tran}_{1\times L},
\end{alignat}
and formally define
\begin{equation}\label{eq:dot_prod_1}
\bOmega\cdot\nabla\bm{u}=\begin{bmatrix}
\bomega_1\cdot\nabla u^1 & \bomega_2\cdot\nabla u^2 &\cdots & \bomega_L\cdot\nabla u^L
\end{bmatrix}^{\tran}.
\end{equation}
Then, for any vector $\bm{v}\in \mathbb{R}^L$, set
\begin{equation}\label{eq:rte_scale_avg_disc}
 \overline{v}=\sum_{l=1}^L w_l v^l
 \quad \text{and} \quad
 \overline{\bm{v}}=\begin{bmatrix}
 \overline{v} & \overline{v} & \cdots & \overline{v}
 \end{bmatrix}^{\tran}_{1\times L}.
\end{equation}

Finally, define the projection matrix $P \in \mathbb{R}^{L \times L}$ and the weight matrix $W \in \mathbb{R}^{L \times L}$ by
\begin{equation*}
 P=\begin{bmatrix}
    w_1 & w_2 & \cdots & w_{L-1} & w_L\\
    w_1 & w_2 & \cdots & w_{L-1} & w_L\\
    \vdots & \vdots & \ddots & \vdots & \vdots\\
    w_1 & w_2 & \cdots & w_{L-1} & w_L\\
   \end{bmatrix}_{L\times L} \quad
 W=\begin{bmatrix}
             w_1 & 0 & \cdots &  0\\
	     0 & w_2 & \cdots &  0\\
	     \vdots & \vdots & \ddots & \vdots\\
	     0 & 0 & \cdots & w_L\\
             \end{bmatrix}
\end{equation*}
respectively, and the discrete collision operator
\begin{equation*}
Q =\frac{\sigma_{\mathrm{t}}}{\varepsilon}I - \left(\frac{\sigma_{\mathrm{t}}}{\varepsilon}-\varepsilon\sigma_{\mathrm{a}}\right)P
= \frac{\sigma_{\mathrm{t}}}{\varepsilon}(I-P) + \varepsilon\sigma_{\mathrm{a}} P.
\end{equation*}
Given the assumptions $\sigma_{\mathrm{t}}$ and $\sigma_{\mathrm{a}}$ in \eqref{as:rte_scale_1_0},  it follows that for each fixed $\bm{x}\in X$, $Q$ is a strictly positive definite matrix with inverse
$Q^{-1}=\frac{1}{\varepsilon \sigma_{\mathrm{a}}}P +\frac{\varepsilon}{\sigma_{\mathrm{t}}}(I-P)$.
In addition, $Q$ is symmetric with respect to the weight $W$, i.e., $WQ=Q^{\tran} W$. 

Using  the notation above, \eqref{eq:rte_scale_DO_all} can be rewritten in the compact form
\begin{subequations}\label{eq:rte_scale_compact}
\begin{alignat}{3}
 \bOmega\cdot\nabla\bm{u} + Q\bm{u} &= \varepsilon\bm{f}, &~~~&\text{ in }~ X\label{eq:rte_scale_compact_a}\\
 \bm{u}&=\bm{\alpha}, &~~~&\text{ on }~ \begin{bmatrix}\partial X_-^1 & \partial X_-^2 &\cdots & \partial X_-^L\end{bmatrix}^{\tran}.\label{eq:rte_scale_compact_BC}
\end{alignat}
\end{subequations}
Here \eqref{eq:rte_scale_compact_BC} is understood as \eqref{eq:rte_scale_DO_bdry}. We have the following existence and uniqueness result of the discrete-ordinate equations \eqref{eq:rte_scale_compact} in $\left[H^1_2(X)\right]^L=\bigl\{\bm{v}\in[L^2(X)]^L;\;\bOmega\cdot\nabla\bm{v} \in [L^2(X)]^L\bigr\}$, where $\bOmega\cdot\nabla\bm{v}$ are the generalized directional derivatives along $\bOmega$. 
\begin{theorem}
	Under Assumptions 
	\ifx\SIAM\TRUE
		\cref{as:rte_scale_1,as:rte_scale_1_0,as:rte_scale_1_1}, 
	\else
		\eqref{as:rte_scale_1}--\eqref{as:rte_scale_1_1}, 
	\fi
	for any fixed $\varepsilon>0$, the problem \eqref{eq:rte_scale_compact} has a unique solution $\bm{u}\in \left[H^1_2(X)\right]^L$.
\end{theorem}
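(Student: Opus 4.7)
The plan is to reduce the system to a fixed-point problem for the scattering (source-iteration) operator and to exploit the uniform coercivity of the matrix $Q$ with respect to the $W$-weighted inner product $\langle\bm{u},\bm{v}\rangle_W=\bm{u}^{\tran}W\bm{v}$. Since $P$ is the rank-one $W$-orthogonal projection onto $W$-constants, the spectrum of $Q$ with respect to $\langle\cdot,\cdot\rangle_W$ consists of $\varepsilon\sigma_{\mathrm{a}}$ on the range of $P$ and $\sigma_{\mathrm{t}}/\varepsilon$ on its $W$-orthogonal complement. Thus the ``scattering'' piece $(\sigma_{\mathrm{t}}/\varepsilon-\varepsilon\sigma_{\mathrm{a}})P$ has strictly smaller spectral norm than the ``diagonal'' piece $(\sigma_{\mathrm{t}}/\varepsilon)I$, and this uniform gap, guaranteed by the assumption $\sigma_{\mathrm{a}}\ge\sigma_{\mathrm{a}}^{\mathrm{min}}>0$, is what will drive a contraction.

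First I would verify, componentwise, the well-posedness of the decoupled single-ordinate transport equation
\begin{equation*}
\bomega_l\cdot\nabla v^l + (\sigma_{\mathrm{t}}/\varepsilon)\,v^l = g^l \text{ in } X,\qquad v^l=\beta^l \text{ on } \partial X_-^l,
\end{equation*}
which admits a unique solution in $H^1_2(X)$ for any $g^l\in L^2(X)$ and $\beta^l\in L^2(\partial X_-^l)$. This is classical --- either via integration along characteristics (exploiting $\sigma_{\mathrm{t}}/\varepsilon\ge c>0$) or as the scattering-free special case of \cite{agoshkov1998boundary}. With this in hand, I define the source-iteration operator $T:[L^2(X)]^L\to[L^2(X)]^L$ by $T\bm{\phi}=\bm{u}$, where $\bm{u}$ solves the decoupled system with source $(\sigma_{\mathrm{t}}/\varepsilon-\varepsilon\sigma_{\mathrm{a}})\overline{\phi}+\varepsilon f$ and inflow data $\bm{\alpha}$. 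A fixed point of $T$ in $[L^2(X)]^L$ is exactly a solution of \eqref{eq:rte_scale_compact}, and any such fixed point is automatically in $[H^1_2(X)]^L$ because the right-hand side $\varepsilon\bm{f}-Q\bm{u}$ of \eqref{eq:rte_scale_compact_a} lies in $[L^2(X)]^L$.

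To show $T$ is a strict contraction, I would take $\bm{w}=T\bm{\phi}_1-T\bm{\phi}_2$, test the equations satisfied by $\bm{w}$ componentwise against $w_l w^l$, sum over $l$, and integrate over $X$. The transport contribution then produces a non-negative outflow term (by the divergence theorem, since $\bm{w}$ has homogeneous inflow), while Cauchy--Schwarz together with the elementary inequality $|\overline{v}|\le\|\bm{v}\|_W$ (which follows from $\sum_l w_l=1$) yields contraction in the weighted norm $\|\bm{v}\|_*^2:=\int_X(\sigma_{\mathrm{t}}/\varepsilon)\|\bm{v}\|_W^2$ with contraction constant at most $\sup_X(1-\varepsilon^2\sigma_{\mathrm{a}}/\sigma_{\mathrm{t}})<1$ --- here the lower bounds in \eqref{as:rte_scale_1}--\eqref{as:rte_scale_1_0} are used in an essential way. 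The Banach fixed-point theorem then delivers existence and uniqueness of $\bm{u}$. The main technical nuisance, rather than a substantive obstacle, is making precise the sense in which $\bm{u}=\bm{\alpha}$ on $\Gamma_-$ when $\bm{\alpha}$ is only $L^2$; this requires the trace theory for $H^1_2(X)$ developed in \cite{agoshkov1998boundary}, which I would invoke rather than redevelop.
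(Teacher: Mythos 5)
Your argument is correct, but there is nothing in the paper to compare it against: the theorem is stated without proof (the authors implicitly lean on the continuous-variable well-posedness theory of \cite{agoshkov1998boundary}, which they cite just above for the un-discretized problem \eqref{eq:rte_scale_all}). Your source-iteration route is the standard way to supply the missing argument, and the details check out. In particular, the contraction estimate is sound: testing the equation for $\bm{w}=T\bm{\phi}_1-T\bm{\phi}_2$ against $w_lw^l$ and discarding the non-negative outflow term gives
\begin{equation*}
\int_X \frac{\sigma_{\mathrm{t}}}{\varepsilon}\,\|\bm{w}\|_W^2 \ud\bx \le \int_X \Bigl(\frac{\sigma_{\mathrm{t}}}{\varepsilon}-\varepsilon\sigma_{\mathrm{a}}\Bigr)|\overline{\psi}|\,|\overline{w}| \ud\bx \le \sup_X\Bigl(1-\frac{\varepsilon^2\sigma_{\mathrm{a}}}{\sigma_{\mathrm{t}}}\Bigr)\,\|\bm{\psi}\|_*\,\|\bm{w}\|_*,
\end{equation*}
and the assumptions \eqref{as:rte_scale_1}--\eqref{as:rte_scale_1_0} give both the positivity of $\sigma_{\mathrm{t}}/\varepsilon-\varepsilon\sigma_{\mathrm{a}}$ and the strict bound $\kappa\le 1-\varepsilon^2\sigma_{\mathrm{a}}^{\mathrm{min}}/\|\sigma_{\mathrm{t}}\|_{L^\infty(X)}<1$; they also give $\sigma_{\mathrm{t}}\ge\varepsilon^2\sigma_{\mathrm{a}}^{\mathrm{min}}>0$, so your weighted norm $\|\cdot\|_*$ is equivalent to the $L^2$ norm and $([L^2(X)]^L,\|\cdot\|_*)$ is complete. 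Two remarks. First, the contraction constant degenerates as $\varepsilon\to 0$; this is harmless for the theorem as stated (fixed $\varepsilon>0$) but is worth flagging, since the uniform-in-$\varepsilon$ estimates elsewhere in the paper come from the energy identity \eqref{lem:B_stab} rather than from source iteration. Second, the only technical point you defer --- that the outflow trace of an $H^1_2$ solution with $L^2$ inflow data and $L^2$ source lies in $L^2(\partial X_+^l;|\bomega_l\cdot\bm{n}|)$, so that the boundary term in the Green's identity \eqref{eq:int_by_parts} is finite and non-negative --- is exactly the content of the trace theory you cite, so invoking it rather than reproving it is appropriate.
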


The space $\left[H^1_2(X)\right]^L$ has a well-defined trace, and for any Lipschitz domain $D\subseteq X$, the following integration by parts formula holds \cite[Corollary B.57]{EG2004}: $\forall u,v\in \bigl\{w\in L^2(X);\;\omega_l\cdot\nabla w \in L^2(X)\bigr\}$, 
\begin{equation}\label{eq:int_by_parts}
	\int_D \bomega_l \cdot\nabla u\, v\ud\bx = -\int_D u\, \bomega_l\cdot \nabla v + \int_{\partial D} \bomega_l\cdot\bm{n} uv\ud\bx, \; 1\le l\le L.
\end{equation}
\begin{lemma}\label{lem:cont}
	Let $E\subset X$ be a Lipschitz surface. Then we have, for $l=1,\cdots,L$,
	\begin{equation}
		\int_{E} \bomega_l\cdot\bm{n} \big(u|_{D_1}-u|_{D_2}\big)\ud\bx=0,
	\end{equation}
	which implies that if $\bomega_l\cdot\bm{n}\neq 0$, then $u$ is continuous on $E$ almost everywhere.
\end{lemma}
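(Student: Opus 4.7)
The plan is to derive the claimed identity by applying the integration-by-parts formula \eqref{eq:int_by_parts} twice: once on each of the two Lipschitz subdomains $D_1, D_2 \subset X$ that sit on opposite sides of $E$, and once on a single Lipschitz domain $D \supset D_1 \cup D_2$ that straddles $E$ and sees $u$ as a single element of $L^2(D)$ with $\bomega_l\cdot\nabla u\in L^2(D)$. The jump across $E$ will appear as the discrepancy between the two computations.

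\textbf{Main steps.} First I would fix a point on $E$ and, using the Lipschitz structure of $E$, choose a neighborhood $N\subset X$ such that $N\setminus E$ splits into two disjoint Lipschitz open sets $D_1$ and $D_2$ with common boundary piece $E\cap N$, and with outward unit normals $\bm{n}_1$ and $\bm{n}_2$ along $E$ satisfying $\bm{n}_1=-\bm{n}_2=:\bm{n}$. Next, for an arbitrary test function $v\in C_c^\infty(N)$ I would apply \eqref{eq:int_by_parts} on $D_1$ and on $D_2$ with $u|_{D_i}$ and $v$; because $v$ is compactly supported in $N$, the boundary integrals on $\partial D_i\setminus E$ vanish, so that summing the two identities yields
\begin{equation*}
\int_{D_1\cup D_2}\!\!\bomega_l\cdot\nabla u\,v\ud\bx
+\int_{D_1\cup D_2}\!\!u\,\bomega_l\cdot\nabla v\ud\bx
=\int_E \bomega_l\cdot\bm{n}\bigl(u|_{D_1}-u|_{D_2}\bigr)v\ud\bx.
\end{equation*}
Then I would apply \eqref{eq:int_by_parts} once more on the full Lipschitz domain $N$ (viewing $u$ as an element of the global space $\{w\in L^2(X):\bomega_l\cdot\nabla w\in L^2(X)\}$): since $v$ vanishes on $\partial N$, the left-hand side of the summed identity above is forced to equal zero. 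Hence the right-hand side vanishes for every $v\in C_c^\infty(N)$, and a density argument gives that $\bomega_l\cdot\bm{n}(u|_{D_1}-u|_{D_2})=0$ almost everywhere on $E\cap N$. Localizing and covering $E$ by such neighborhoods delivers the claimed identity, and the continuity assertion follows immediately by dividing by $\bomega_l\cdot\bm{n}$ on the subset of $E$ where it is nonzero.

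\textbf{Anticipated obstacle.} The routine pieces are the two applications of \eqref{eq:int_by_parts} and the bookkeeping of signs along $E$. The step requiring some care is the justification that $u$ admits a well-defined one-sided trace on $E$ from both $D_1$ and $D_2$ in a sense strong enough for the integral on $E$ to be meaningful, given that $u$ only has the directional regularity $\bomega_l\cdot\nabla u\in L^2$. This is precisely the content of the trace result cited from \cite{EG2004} that underlies \eqref{eq:int_by_parts}; I would appeal to that reference applied separately on $D_1$ and $D_2$ (both Lipschitz), so that $u|_{D_i}$ has a trace on $E$ in the appropriate dual space, and the pairing against the smooth $v$ in the surface integral is legitimate.
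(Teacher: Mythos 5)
Your proposal is correct and rests on the same mechanism as the paper's proof: split a Lipschitz domain straddling $E$ into $D_1\cup D_2$, apply the integration-by-parts formula \eqref{eq:int_by_parts} on each piece and on the whole, and read off the jump term on $E$ as the discrepancy. The only substantive difference is that the paper takes $v=1$ (i.e., uses the divergence formula \eqref{eq:div}), which yields exactly the integral identity displayed in the lemma, whereas you pair against arbitrary $v\in C_c^\infty(N)$ and then invoke density. Your variant is worth the extra bookkeeping: the single identity $\int_E \bomega_l\cdot\bm{n}\,(u|_{D_1}-u|_{D_2})\ud\bx=0$ with $v=1$ does not by itself force the integrand to vanish a.e.; to get the "continuity almost everywhere" conclusion (which is what is actually used later, in \eqref{eq:cont}) one must localize, either by running the paper's argument over arbitrary subpatches of $E$ or, as you do, by testing against arbitrary compactly supported $v$. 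So your route proves the stronger pointwise statement directly, at the cost of having to justify the one-sided traces of $u$ on $E$ in a sense that pairs with smooth test functions --- which, as you note, is supplied by the trace theory behind \eqref{eq:int_by_parts} applied separately on the Lipschitz pieces $D_1$ and $D_2$.
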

\begin{proof}
	Letting $v=1$ in \eqref{eq:int_by_parts} yields the divergence formula:
	\begin{equation}\label{eq:div}
	\int_{\partial D} \bomega_l\cdot\bm{n} u\ud\bx = \int_D \bomega_l \cdot\nabla u\ud\bx.
	\end{equation}
	Let $D\subseteq X$ be a Lipschitz domain such that $D=D_1\cup D_2$ with $D_1\cap D_2=\emptyset$ and $\overline{D}_1\cap\overline{D}_2=E$, i.e., $D_1$ and $D_2$ has a shared surface $E$. Without loss of generality, we assume $E$ is a plane and the unit normal $\bm{n}$ on $E$ points from $D_1$ to $D_2$. Then by the divergence formula \eqref{eq:div}, we have 
	\begin{align*}
	\int_{\partial D} \bomega_l\cdot\bm{n} u\ud\bx &= \int_{D} \bomega_l \cdot\nabla u\ud\bx = \int_{D_1} \bomega_l \cdot\nabla u \ud\bx + \int_{D_2} \bomega_l \cdot\nabla u \ud\bx \\
	&= \int_{\partial D_1} \bomega_l\cdot\bm{n} u \ud\bx + \int_{\partial D_2} \bomega_l\cdot\bm{n} u \ud\bx \\
	&= \int_{\partial D} \bomega_l\cdot\bm{n} u\ud\bx + \int_{E} \bomega_l\cdot\bm{n} \big(u|_{D_1}-u|_{D_2}\big)\ud\bx.
	\end{align*}
	Therefore
	\begin{equation}
	\int_{E} \bomega_l\cdot\bm{n} \big(u|_{D_1}-u|_{D_2}\big)\ud\bx=0.
	\end{equation}
\end{proof}

\subsection{Variational Formulation}

Given $\bm{u}, \bm{v}\in \left[L^2(X)\right]^L$,  define the inner product
\begin{equation}
(\bm{u},\bm{v})=\sum_{l=1}^Lw_l\int_X u^l\,v^l\ud x=\int_X \bm{u}^{\tran} W\bm{v}\ud\bm{x}
\end{equation}
as well as the norms 
\begin{equation}\label{eq:norm_def}
\|\bm{u}\|_{r,X}=\left(\sum_{l=1}^Lw_l\|u^l\|_{r,X}^2\right)^{1/2} \quad\text{ for } \bm{u}\in \left[H^r(X)\right]^L.
\end{equation}
When $r=0$, we omit the subscripts $0$ and $X$, i.e., $\|\bm{u}\|:=\|\bm{u}\|_{0,X}$.
Similarly, on the boundary, let 
\begin{equation}
	(\bm{u},\bm{v})_{\partial X_{\pm}}=\sum_{l=1}^Lw_l\int_{\partial X^l_{\pm}} |\bomega_l\cdot\bm{n}|\, u^l v^l\ud x
	\quad \text{and}\quad 
	\|\bm{u}\|_{r,\partial X_{\pm}}=\left(\sum_{l=1}^Lw_l\|u^l\|_{r,\partial X^l_{\pm}}^2\right)^{1/2},
\end{equation}
where $\partial X_-^l$ and $\partial X_+^l$ are defined in \eqref{eq:disc_bndry}. When $r=0$, we omit the subscripts $0$. Finally, set $\|\bm{u}\|_{\partial X}=\sqrt{ \|\bm{u}\|^2_{\partial X_-} +\|\bm{u}\|^2_{\partial X_+}}$.

Multiplying \eqref{eq:rte_scale_compact_a} by an arbitrary function $\bm{v}\in \left[H^1_2(X)\right]^L$, integrating over $X$, using integration-by-parts, and employing the boundary condition \eqref{eq:rte_scale_compact_BC}, we get a variational formulation of \eqref{eq:rte_scale_compact}:
 \begin{equation}\label{eq:rte_var_form}
  \mathfrak{a}(\bm{u},\bm{v})=\ell(\bm{v}),\quad \forall \bm{v}\in \left[H^1_2(X)\right]^L,
 \end{equation}
 where
 \begin{align} 
    \mathfrak{a}(\bm{u},\bm{v})&= - (\bm{u},\bOmega\cdot\nabla\bm{v}) + (\bm{u}, \bm{v})_{\partial X_+} + (Q\bm{u},\bm{v}),\label{eq:def_B}\\
  \ell(\bm{v})&= \varepsilon (\bm{f},\bm{v}) + (\bm{\alpha}, \bm{v})_{\partial X_{-}}.\label{eq:def_ell}
 \end{align}
 Since $\bm{u}\in \left[H^1_2(X)\right]^L$,
 after another integration-by-parts, the bilinear form $\mathfrak{a}(\bm{u},\bm{v})$ can be rewritten in the following form:
 \begin{equation}\label{eq:rte_bi_form_cont}
  \mathfrak{a}(\bm{u},\bm{v})=(\bOmega\cdot\nabla\bm{u},\bm{v}) + (\bm{u}, \bm{v})_{\partial X_-} + (Q\bm{u},\bm{v}).
 \end{equation}

We will denote by 
\begin{equation*}
(\bm{u},\bm{v})_Q=(Q\bm{u},\bm{v}) \quad\text{ and } \quad \|\bm{u}\|_Q=(\bm{u},\bm{u})_Q^{1/2}.
\end{equation*}
Since $Q$ is symmetric with respect to $W$ and strictly positive definite, $(\cdot,\cdot)_{Q}$ 
is an inner product. One verifies that, since $(\bm{u}-\overline{\bm{u}},\overline{\bm{u}})=0$,  
\begin{equation*}
\|\bm{u}\|^2_Q = \frac{1}{\varepsilon}\|\sigma_{\mathrm{t}}^{1/2}(\bm{u}-\overline{\bm{u}})\|^2 + \varepsilon \|\sigma_{\mathrm{a}}^{1/2}\overline{\bm{u}}\|^2 \label{eq:norm_equiv_1}
\end{equation*}
holds for all $\bm{u}\in \left[L^2(X)\right]^L$.
A direct calculation shows that
\begin{equation}\label{lem:B_stab} 
	\mathfrak{a}(\bm{u},\bm{u}) = \|\bm{u}\|^2_Q + \frac{1}{2}\|\bm{u}\|^2_{\partial X}.
\end{equation}

\subsection{A priori Estimates}

The purpose of this subsection is to derive some a priori estimates for the discrete-ordinate equation. 
We deduce an a priori estimate that is needed for the error analysis. We essentially follow the proof in \cite{GK2010}.
\begin{lemma}\label{lem:pri_1}
	Assume that $\bm{\alpha}(\bm{x},\bomega)=\bm{\alpha}_0(\bm{x})+\bm{\alpha}_1(\bm{x},\bomega)$ with $\bm{\alpha}_0(\bm{x})\in \left[H^{1/2}(\partial X)\right]^L$ and $\bm{\alpha}_1(\bm{x},\bomega)\in \left[L^2(\partial X_-)\right]^L$. Then there is a constant $c$, uniform with respect to $\varepsilon$, so that 
	\begin{equation}\label{lem:priori_est_1}
		\frac{1}{\varepsilon}\|\bm{u}-\overline{\bm{u}}\|^2 + \varepsilon\|\overline{\bm{u}}\|^2 + \|\bm{u}-\bm{\alpha}_0\|^2_{\partial X} \le c\left(\|\bm{f}\|^2 + \|\bm{\alpha}_0\|^2_{1/2, \partial X}\right)\varepsilon + \|\bm{\alpha}_1\|^2_{\partial X_-}.
	\end{equation}
\end{lemma}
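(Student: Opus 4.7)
The plan is to eliminate the non-homogeneous boundary data by subtracting a direction-independent lift of $\alpha_0$, apply the coercivity identity \eqref{lem:B_stab} to the shifted unknown, and control the residual terms by Cauchy--Schwarz combined with the crucial observation that one stray term senses only the non-equilibrium part $\bm{u}-\overline{\bm{u}}$.  Pick any $\tilde\alpha_0\in H^1(X)$ whose trace on $\partial X$ is $\alpha_0$ and which satisfies $\|\tilde\alpha_0\|_{1,X}\lesssim \|\alpha_0\|_{1/2,\partial X}$, and form the isotropic vector $\tilde{\bm{\alpha}}_0=[\tilde\alpha_0,\dots,\tilde\alpha_0]^{\tran}$.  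Because $(I-P)\tilde{\bm{\alpha}}_0=\bm{0}$, we have the key algebraic identities $Q\tilde{\bm{\alpha}}_0=\varepsilon\sigma_{\mathrm{a}}\tilde{\bm{\alpha}}_0$, $\overline{\tilde{\bm{\alpha}}_0}=\tilde{\bm{\alpha}}_0$, and $\tilde{\bm{\alpha}}_0|_{\partial X}=\bm{\alpha}_0$.

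Set $\bm{w}:=\bm{u}-\tilde{\bm{\alpha}}_0$.  Subtracting $\mathfrak{a}(\tilde{\bm{\alpha}}_0,\bm{v})$, computed from \eqref{eq:rte_bi_form_cont} with the identities above, from $\mathfrak{a}(\bm{u},\bm{v})=\ell(\bm{v})$, and using $\bm{\alpha}-\bm{\alpha}_0=\bm{\alpha}_1$ on $\partial X_-$, produces
\begin{equation*}
\mathfrak{a}(\bm{w},\bm{v}) = \varepsilon(\bm{f},\bm{v}) + (\bm{\alpha}_1,\bm{v})_{\partial X_-} - (\bOmega\cdot\nabla\tilde{\bm{\alpha}}_0,\bm{v}) - \varepsilon(\sigma_{\mathrm{a}}\tilde{\bm{\alpha}}_0,\bm{v})
\end{equation*}
for every $\bm{v}\in [H^1_2(X)]^L$.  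Testing against $\bm{w}$ and invoking \eqref{lem:B_stab} gives
\begin{equation*}
\|\bm{w}\|_Q^2+\tfrac12\|\bm{w}\|_{\partial X}^2 = \varepsilon(\bm{f},\bm{w}) + (\bm{\alpha}_1,\bm{w})_{\partial X_-} - (\bOmega\cdot\nabla\tilde{\bm{\alpha}}_0,\bm{w}) - \varepsilon(\sigma_{\mathrm{a}}\tilde{\bm{\alpha}}_0,\bm{w}).
\end{equation*}

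The delicate term is $(\bOmega\cdot\nabla\tilde{\bm{\alpha}}_0,\bm{w})$, since naively no factor of $\varepsilon$ is available.  The escape is that $\sum_l w_l\bomega_l=\bm{0}$ and $\tilde\alpha_0$ is $\bomega$-independent, so $(\bOmega\cdot\nabla\tilde{\bm{\alpha}}_0,\overline{\bm{w}})=0$ and
\begin{equation*}
|(\bOmega\cdot\nabla\tilde{\bm{\alpha}}_0,\bm{w})| = |(\bOmega\cdot\nabla\tilde{\bm{\alpha}}_0,\bm{w}-\overline{\bm{w}})| \le \|\nabla\tilde\alpha_0\|_{0,X}\,\|\bm{w}-\overline{\bm{w}}\| \le \tfrac{\varepsilon}{4\delta}\|\nabla\tilde\alpha_0\|_{0,X}^2 + \tfrac{\delta}{\varepsilon}\|\bm{w}-\overline{\bm{w}}\|^2,
\end{equation*}
and for sufficiently small $\delta$ the second piece is absorbed into $\|\bm{w}\|_Q^2$ through the identity \eqref{eq:norm_equiv_1}.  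The remaining three terms on the right-hand side are handled by Cauchy--Schwarz and Young's inequality with small weights: their equilibrium pieces are absorbed into the $\varepsilon\sigma_{\mathrm{a}}^{\min}\|\overline{\bm{w}}\|^2$ contribution of $\|\bm{w}\|_Q^2$, their non-equilibrium pieces into $\varepsilon^{-1}\|\bm{w}-\overline{\bm{w}}\|^2$, and the boundary piece into $\tfrac12\|\bm{w}\|^2_{\partial X}$, leaving on the right only terms of size $\varepsilon\|\bm{f}\|^2$, $\varepsilon\|\tilde\alpha_0\|_{1,X}^2$, and $\|\bm{\alpha}_1\|^2_{\partial X_-}$.

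The last step is to return from $\bm{w}$ to $\bm{u}$.  Because $\tilde{\bm{\alpha}}_0$ is isotropic, $\bm{u}-\overline{\bm{u}}=\bm{w}-\overline{\bm{w}}$, so the first term of \eqref{lem:priori_est_1} follows from $\|\bm{w}\|_Q^2$ via \eqref{eq:norm_equiv_1}.  The trace identity $\bm{w}|_{\partial X}=\bm{u}-\bm{\alpha}_0$ yields the boundary term directly, while $\varepsilon\|\overline{\bm{u}}\|^2\le 2\varepsilon\|\overline{\bm{w}}\|^2+2\varepsilon\|\tilde\alpha_0\|^2_{0,X}$ combined with the lifting estimate $\|\tilde\alpha_0\|_{1,X}\lesssim\|\alpha_0\|_{1/2,\partial X}$ handles the middle term.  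The main obstacle throughout is the treatment of $(\bOmega\cdot\nabla\tilde{\bm{\alpha}}_0,\bm{w})$: a direct $L^2$ bound would cost a factor of $\varepsilon^{-1/2}$ and destroy $\varepsilon$-uniformity, and only the zero-mean trick based on $\sum_l w_l\bomega_l=\bm{0}$ allows the $\varepsilon^{-1}$ from $\|\bm{w}\|_Q^2$ to balance Young's inequality and deliver the uniform bound.
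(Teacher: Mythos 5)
Your proposal is correct and follows essentially the same route as the paper: the paper also lifts the isotropic part $\bm{\alpha}_0$ to an isotropic $H^1$ function $\bm{m}$ (via the harmonic extension, which gives the same $\|\bm{m}\|_{1,X}\lesssim\|\bm{\alpha}_0\|_{1/2,\partial X}$ bound as your generic lift), tests the equation for $\bm{u}-\bm{m}$ against itself, and kills the dangerous advection term through exactly your zero-mean identity $(\bOmega\cdot\nabla\bm{m},\overline{\bm{u}}-\bm{m})=0$ coming from $\sum_l w_l\bomega_l=0$, so that only $(\bOmega\cdot\nabla\bm{m},\bm{u}-\overline{\bm{u}})$ survives and can be absorbed at cost $O(\varepsilon)$. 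The only cosmetic differences are that the paper evaluates $(\bm{\alpha}_1,\bm{u}-\bm{m})_{\partial X_-}$ exactly as $\|\bm{\alpha}_1\|^2_{\partial X_-}$ using the boundary condition rather than absorbing it by Young's inequality.
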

\begin{remark}
	A reasonable choice of $\bm{\alpha}_0(\bm{x})$ is $\bm{\alpha}_0(\bm{x})=\overline{\bm{\alpha}}(\bm{x}) := \frac{1}{\pi} \int_{\bomega\cdot\bm{n}<0} |\bomega\cdot\bm{n}|\,\bm{\alpha}(\bm{x},\bomega)\ud \bomega$. However, $\bm{\alpha}_0(\bm{x})$ can be any function which is independent of $\bomega$ and we can take $\bm{\alpha}_0(\bm{x})=\bm{\alpha}(\bm{x},\bomega)$ when $\bm{\alpha}(\bm{x},\bomega)$ is isotropic. 
\end{remark}
\begin{proof}
	Let $\bm{m}\in \left[H^1(X)\right]^L$ solve
	\begin{equation}
		\left\{\begin{aligned}
			\int_X \nabla \bm{m}\cdot\nabla\bm{v}\ud \bm{x}&=0, &~&\forall \bm{v}\in \left[H^1_0(X)\right]^L, \\
			\bm{m}|_{\partial X}&=\bm{\alpha}_0.
		\end{aligned}\right.
	\end{equation}
	Then by standard elliptic theory (cf.~\cite[Proposition 2.10]{EG2004}),
	\begin{equation}
		\|\bm{m}\|_{1,X}\le c \|\bm{\alpha}_0\|_{1/2,\partial X}.
	\end{equation}
	Since $\bm{m}$ is isotropic, $(\bm{m},\bm{v})_Q=(\varepsilon\sigma_{\mathrm{a}}\bm{m},\bm{v})$, using the representation \eqref{eq:rte_bi_form_cont} and taking $\bm{v}=\bm{u}-\bm{m}$, we rewrite \eqref{eq:rte_var_form} as follows: 
	\begin{align*}
		\mathfrak{a}(\bm{u}-\bm{m},\bm{u}-\bm{m})
		&=\ell(\bm{u}-\bm{m})-\mathfrak{a}(\bm{m},\bm{u}-\bm{m}) \nonumber\\
		&=\varepsilon (\bm{f},\bm{u}-\bm{m}) + (\bm{\alpha}_1,\bm{u}-\bm{m})_{\partial X_-}
		- (\bOmega\cdot\nabla \bm{m} + \varepsilon\sigma_{\mathrm{a}} \bm{m}, \bm{u}-\bm{m}).
	\end{align*}
	By the fact that $\bm{m}$ is isotropic, the left-hand side of the above equation can be written as
	\begin{equation}
		\mathfrak{a}(\bm{u}-\bm{m},\bm{u}-\bm{m})=\frac{1}{\varepsilon}\|\sigma_{\mathrm{t}}^{1/2}(\bm{u}-\overline{\bm{u}})\|^2 + \varepsilon \|\sigma_{\mathrm{a}}^{1/2}(\overline{\bm{u}}-\bm{m})\|^2 + \frac{1}{2}\|\bm{u}-\bm{m}\|^2_{\partial X}.
	\end{equation}
	Therefore, we infer
	\begin{align}\label{eq:rte_pre_bd}
		\frac{1}{\varepsilon}\|\sigma_{\mathrm{t}}^{1/2}(\bm{u}&-\overline{\bm{u}})\|^2 + \varepsilon \|\sigma_{\mathrm{a}}^{1/2}(\overline{\bm{u}}-\bm{m})\|^2 + \frac{1}{2}\|\bm{u}-\bm{m}\|^2_{\partial X}\\
		&= \varepsilon (\bm{f},\bm{u}-\bm{m}) + (\bm{\alpha}_1,\bm{u}-\bm{m})_{\partial X_-}
		- (\bOmega\cdot\nabla \bm{m} + \varepsilon\sigma_{\mathrm{a}} \bm{m}, \bm{u}-\bm{m})\nonumber\\
		&:= \mathrm{I} + \mathrm{II} + \mathrm{III}. \nonumber
	\end{align}
	
	The rest of the proof consists of bounding each $\mathrm{I}$, $\mathrm{II}$ and $\mathrm{III}$ above. 
	For the first term, using the Cauchy-Schwarz inequality and noting that $\bm{f}$ is isotropic, we have
	\begin{align*}
		\mathrm{I} =\varepsilon (\bm{f},\bm{u}-\bm{m})
		&\le \varepsilon\|\sigma_{\mathrm{a}}^{-1/2}\bm{f}\| \|\sigma_{\mathrm{a}}^{1/2}(\overline{\bm{u}}-\bm{m})\| \nonumber\\
		&\le \varepsilon\left\|\sigma_{\mathrm{a}}^{-1}\right\|_{L^\infty(X)}\|\bm{f}\|^2 + \frac{\varepsilon}{4}\|\sigma_{\mathrm{a}}^{1/2}(\overline{\bm{u}}-\bm{m})\|^2.
	\end{align*}
	For the second term, since $\bm{u}-\bm{m}=\bm{\alpha}-\bm{\alpha}_0$, it follows that
	\begin{equation*}
		\mathrm{II}=(\bm{\alpha}_1,\bm{\alpha}_1)_{\partial X_-}
		=\|\bm{\alpha}_1\|_{\partial X_-}^2.
	\end{equation*}
	We handle the third term as follows:
	\begin{align*}
		\mathrm{III}&= - (\bOmega\cdot\nabla \bm{m} + \varepsilon\sigma_{\mathrm{a}} \bm{m}, \bm{u}-\overline{\bm{u}}+\overline{\bm{u}}-\bm{m}) \nonumber\\
		&= - (\bOmega\cdot\nabla \bm{m} + \varepsilon\sigma_{\mathrm{a}} \bm{m}, \bm{u}-\overline{\bm{u}}) - (\bOmega\cdot\nabla \bm{m} + \varepsilon\sigma_{\mathrm{a}} \bm{m}, \overline{\bm{u}}-\bm{m})\nonumber\\
		&= - (\bOmega\cdot\nabla \bm{m}, \bm{u}-\overline{\bm{u}}) - (\varepsilon\sigma_{\mathrm{a}} \bm{m}, \overline{\bm{u}}-\bm{m}),
	\end{align*}
	where, in the last step, we have used the fact that $(\varepsilon\sigma_{\mathrm{a}} \bm{m}, \bm{u}-\overline{\bm{u}})
	=\bm{0}$ and $(\bOmega\cdot\nabla \bm{m}, \overline{\bm{u}}-\bm{m})=\bm{0}$ due to the symmetry.
	Therefore,
	\begin{align*}
		|\mathrm{III}|
		&\le \|\sigma_{\mathrm{t}}^{-1/2}\nabla\bm{m}\| \|\sigma_{\mathrm{t}}^{1/2}(\bm{u}-\overline{\bm{u}})\| + \varepsilon\|\sigma_{\mathrm{a}}^{1/2}\bm{m}\| \|\sigma_{\mathrm{a}}^{1/2}(\overline{\bm{u}}-\bm{m})\| \nonumber\\
		&\le \frac{\varepsilon}{2}\left\|\sigma_{\mathrm{t}}^{-1}\right\|_{L^2(X)}\|\nabla\bm{m}\|^2 + \frac{1}{2\varepsilon}\|\sigma_{\mathrm{t}}^{1/2}(\bm{u}-\overline{\bm{u}})\|^2 \nonumber \\
		&\phantom{=}\quad + \varepsilon\left\|\sigma_{\mathrm{a}}\right\|_{L^2(X)}\|\bm{m}\|^2 + \frac{\varepsilon}{4}\|\sigma_{\mathrm{a}}^{1/2}(\overline{\bm{u}}-\bm{m})\|^2.
	\end{align*}
	
	By inserting all the bounds derived above into \eqref{eq:rte_pre_bd}, we finally infer that
	\begin{equation*}
		\frac{1}{\varepsilon}\|\sigma_{\mathrm{t}}^{1/2}(\bm{u}-\overline{\bm{u}})\|^2 + \varepsilon\|\sigma_{\mathrm{a}}^{1/2}(\overline{\bm{u}}-\bm{m})\|^2 + \|\bm{u}-\bm{m}\|^2_{\partial X}
		\le c(\|\bm{f}\|^2 + \|\bm{\alpha}_0\|^2_{1/2,\partial X_-})\varepsilon + 2\|\bm{\alpha}_1\|^2_{\partial X_-},
	\end{equation*}
	from which \eqref{lem:priori_est_1} follows easily since $\|\overline{\bm{u}}\|^2=\|\overline{\bm{u}}-\bm{m}+\bm{m}\|^2\le\|\overline{\bm{u}}-\bm{m}\|^2+\|\bm{m}\|^2$ and $\|\bm{m}\|_{\partial X_-}=\|\bm{m}\|_{\partial X_+}$ due to the isotropy of $\bm{m}$.
\end{proof}

\begin{remark}
	In \cite{GK2010}, the terms I and III are bounded in similar fashion.  However, because the analysis in \cite{GK2010} is discrete, the authors construct a bound for a discrete analog of II that is $O(\varepsilon/h)$, using an inverse inequality.
\end{remark}

\begin{remark}
	In case $f$ is not isotropic, define $\tilde{\bm{f}}=\bm{f}-\overline{\bm{f}}$. Then $\mathrm{I} =\varepsilon (\bm{f},\bm{u}-\bm{m}) = \varepsilon (\overline{\bm{f}},\bm{u}-\bm{m}) + \varepsilon (\bm{f}-\overline{\bm{f}},\bm{u}-\bm{m}) = \varepsilon (\overline{\bm{f}},\overline{\bm{u}}-\bm{m}) + \varepsilon (\bm{f}-\overline{\bm{f}},\bm{u}-\overline{\bm{u}})$. With an analogous argument, we have
	\begin{multline}\label{eq:priori_est_1_aniso}
		\frac{1}{\varepsilon}\|\bm{u}-\overline{\bm{u}}\|^2 + \varepsilon\|\overline{\bm{u}}\|^2 + \|\bm{u}-\bm{\alpha}_0\|^2_{\partial X} \\
		\le c\left(\|\overline{\bm{f}}\|^2 + \|\bm{\alpha}_0\|^2_{1/2, \partial X}\right)\varepsilon + \|\bm{\alpha}_1\|^2_{\partial X_-} + \varepsilon^3\|\bm{f}-\overline{\bm{f}}\|^2.
	\end{multline}
\end{remark}


Let us denote
\begin{equation}\label{def:delta}
	\delta:=\|\bm{\alpha}_1(\bm{x},\bomega)\|_{\partial X_-}.
\end{equation}
We have $\|\bm{u}-\bm{\alpha}_0\|_{\partial X_-}=\|\bm{\alpha}_1\|_{\partial X_-}=\delta\le \sqrt{C\varepsilon + \delta^2}$. 
\begin{corollary}\label{cor:est} 
	Let $\delta$ be given in \eqref{def:delta}. Then\footnote{The estimate for $\|\overline{\bm{u}}\|$ is obviously not optimal. In fact, when $\delta> 0$, as $\varepsilon\rightarrow 0$, $\|\overline{\bm{u}}\|\rightarrow \infty$, which is against our intuition. Better estimates for $\|\overline{\bm{u}}\|$ is possible. 
		But for our propose, the stated estimate is sufficient.}
	\begin{alignat*}{2}
		\|\bm{u}-\overline{\bm{u}}\| & \le \sqrt{C\varepsilon^2 + \delta^2\varepsilon},\quad &\|\overline{\bm{u}}\| &\le \sqrt{C +\frac{\delta^2}{\varepsilon}}, \\
		 \|\bm{u}-\bm{\alpha}_0\|_{\partial X_+} &\le \sqrt{C\varepsilon + \delta^2},\quad &\|\bm{u}-\bm{\alpha}_0\|_{\partial X} &\le \sqrt{C\varepsilon + \delta^2}.
	\end{alignat*}
	If we further assume that $\bm{\alpha}$ is isotropic, then
	\begin{equation}
		\|\bm{u}-\overline{\bm{u}}\|\le \sqrt{C}\varepsilon,\quad \|\overline{\bm{u}}\|\le \sqrt{C}, \quad\text{and\quad} \|\bm{u}-\bm{\alpha}_0\|_{\partial X}=\|\bm{u}-\bm{\alpha}_0\|_{\partial X_+}\le  \sqrt{C\varepsilon}.
	\end{equation}
\end{corollary}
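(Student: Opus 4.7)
The plan is to derive all four estimates directly from Lemma \ref{lem:pri_1} by isolating each of the three nonnegative summands on the left-hand side of \eqref{lem:priori_est_1}. Absorbing the problem-data factors $\|\bm{f}\|^2$ and $\|\bm{\alpha}_0\|^2_{1/2,\partial X}$ into a single constant $C$, and using $\delta^2 = \|\bm{\alpha}_1\|^2_{\partial X_-}$, the a priori bound reads
\begin{equation*}
\frac{1}{\varepsilon}\|\bm{u}-\overline{\bm{u}}\|^2 + \varepsilon\|\overline{\bm{u}}\|^2 + \|\bm{u}-\bm{\alpha}_0\|^2_{\partial X} \le C\varepsilon + \delta^2 .
\end{equation*}
Each inequality in the first part of the corollary will be obtained by discarding the other two nonnegative terms and rearranging.

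Concretely, multiplying through by $\varepsilon$ and keeping only the first summand gives $\|\bm{u}-\overline{\bm{u}}\|^2 \le C\varepsilon^2 + \delta^2\varepsilon$; dividing by $\varepsilon$ and keeping only the middle summand gives $\|\overline{\bm{u}}\|^2 \le C + \delta^2/\varepsilon$; keeping only the third summand gives $\|\bm{u}-\bm{\alpha}_0\|^2_{\partial X} \le C\varepsilon + \delta^2$. Since
\begin{equation*}
\|\bm{u}-\bm{\alpha}_0\|^2_{\partial X} = \|\bm{u}-\bm{\alpha}_0\|^2_{\partial X_-} + \|\bm{u}-\bm{\alpha}_0\|^2_{\partial X_+},
\end{equation*}
with both summands nonnegative, the same bound applies to the outflow piece. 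Taking square roots yields the four inequalities stated in the first part.

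For the isotropic case, I would invoke the remark following Lemma \ref{lem:pri_1} and take $\bm{\alpha}_0(\bm{x}) = \bm{\alpha}(\bm{x},\bomega)$, so that $\bm{\alpha}_1 \equiv 0$ and hence $\delta = 0$. The first two general bounds collapse to $\|\bm{u}-\overline{\bm{u}}\|\le \sqrt{C}\,\varepsilon$ and $\|\overline{\bm{u}}\| \le \sqrt{C}$. Moreover, the inflow boundary condition \eqref{eq:rte_scale_compact_BC} gives $\bm{u} = \bm{\alpha} = \bm{\alpha}_0$ on each $\partial X_-^l$, so $\|\bm{u}-\bm{\alpha}_0\|_{\partial X_-} = 0$. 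This yields both the claimed identity $\|\bm{u}-\bm{\alpha}_0\|_{\partial X}=\|\bm{u}-\bm{\alpha}_0\|_{\partial X_+}$ and, combined with the third general bound at $\delta = 0$, the estimate $\|\bm{u}-\bm{\alpha}_0\|_{\partial X_+} \le \sqrt{C\varepsilon}$.

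There is no real obstacle here: the corollary is a collection of direct consequences of Lemma \ref{lem:pri_1}, and the work has essentially already been done in the proof of that lemma. The only caveat is the sub-optimality flagged in the authors' footnote, namely that the $\|\overline{\bm{u}}\|$ bound degenerates like $1/\sqrt{\varepsilon}$ as $\varepsilon \to 0$ whenever $\delta > 0$; this weakness is acknowledged and is sufficient for the downstream error analysis.
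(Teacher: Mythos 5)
Your proposal is correct and matches the paper's (implicit) argument: the corollary is stated without proof precisely because each bound follows by isolating one nonnegative summand in \eqref{lem:priori_est_1}, absorbing data into $C$, and the isotropic case follows by taking $\bm{\alpha}_0=\bm{\alpha}$ so that $\delta=0$ and $\bm{u}=\bm{\alpha}_0$ on the inflow boundary. Nothing further is needed.
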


\subsection{Discontinuous {G}alerkin method for spatial discretization}\label{sec:DG}
Let $\mathcal{T}_h$ be a regular family of triangulations/rectangulations of $X$. The meshes are assumed to be affine to avoid unnecessary technicalities, i.e., $X$ is assumed to be a polyhedron.
Define a finite element space by
\begin{equation}\label{eq:poly_space}
	V_h^k=\begin{cases}
	\{v\in L^2(\mathbb{S}); \;v|_K\in P_k(K)\;\forall K\in \mathcal{T}_h\}, &\text{ if } \mathcal{T}_h \text{ is triangular}, \\
	\{v\in L^2(\mathbb{S}); \;v|_K\in Q_k(K)\;\forall K\in \mathcal{T}_h\}, &\text{ if } \mathcal{T}_h \text{ is rectangular},
	\end{cases}
\end{equation}
where $k$ is a non-negative integer, $P_k(K)$ denotes the set of all polynomials on $K$ of a total degree no more than $k$, and
$Q_k(K) = \{\sum_j c_j p_j(x)q_j(y)r_j(z): p_j,q_j,r_j$ polynomials of degree $\le k\}$. Define $h_K:= \textrm{diam}(K)$ and set $h:=\max_{K\in \mathcal{T}_h}h_K$. For $\bx\in\partial K$, let $\bm{n}_K(\bx)$ be the outward unit normal to $K$. Let $\mathcal{E}^\mathrm{i}_h$ be the set of all interior faces of $\mathcal{T}$ and assign a unit normal direction $\bm{n}_e$ to each $e\in \mathcal{E}^\mathrm{i}_h$.
Define $\bm{V}_h=\left[V_h^k\right]^L$, and write a generic element in $\bm{V}_h$ as $\bm{v}_h=\{v^l_h\}^L_{l=1}$.

We introduce the following notation. For $\bm{u},\bm{v}\in \left[L^2(K)\right]^L$, $\left[L^2(e)\right]^L$, or $\left[L^2(\mathcal{E}^\mathrm{i}_h)\right]^L$, respectively,
\begin{eqnarray*}
 & (\bm{u},\bm{v})_K= \sum_{l=1}^L w_l\int_K u^l v^l\ud x = \int_K \bm{u}^{\tran} W \bm{v}\ud\bm{x},\quad \|\bm{u}\|_K=(\bm{u},\bm{u})_K^{1/2},\\
 & (\bm{u},\bm{v})_e= \sum_{l=1}^L w_l\int_e u^l v^l\ud x = \int_e \bm{u}^{\tran} W \bm{v}\ud\bm{x},\quad \|\bm{u}\|_e=(\bm{u},\bm{u})_e^{1/2},\\
 & (\bm{u},\bm{v})_{\mathcal{E}^\mathrm{i}_h} 
 =  \sum_{e\in \mathcal{E}^\mathrm{i}_h}\int_e (|\bOmega\cdot\bm{n}_e|\, \bm{u})^{\tran} W \bm{v}\ud\bm{x},\quad \|\bm{u}\|_{\mathcal{E}^\mathrm{i}_h}=(\bm{u},\bm{u})^{1/2}_{\mathcal{E}^\mathrm{i}_h}.
\end{eqnarray*}
Here, $|\bOmega\cdot\bm{n}_e|\, \bm{u} := \begin{bmatrix} |\bomega_1\cdot\bm{n}_e| u^1 & \cdots & |\bomega_L\cdot\bm{n}_e| u^L \end{bmatrix}^{\tran}$ (cf. \eqref{eq:dot_prod_1}).
For an interior face $e$, let $K^+$ and $K^-$ be two adjacent tetrahedrons sharing $e$ and the unit normal $\bm{n}_e$ points from $K^+$ to $K^-$. For a scalar-valued function $v$, we write $v^+=v|_{K^+}$ and $v^-=v|_{K^-}$. Then define the jump of $v$ on $e$ by $[\![ v ]\!]=v^+-v^-$ and the average of $v$ on $e$ by $\{\!\!\{ v \}\!\!\}=(v^+ + v^-)/2$.

The discontinuous {G}alerkin approximation $\bm{u}_h\in \bm{V}_h$ is locally defined by requiring, for each $K\in \mathcal{T}_h$, 
\begin{multline}\label{eq:dg_local}
(\bOmega \cdot \bm{n}_K \hat{\bm{u}}_h, \bm{v}_h)_{\partial K}
-(\bm{u}_h, \bOmega \cdot \nabla  \bm{v}_h )_K 
+ (Q \bm{u}_h, \bm{v}_h)_K \\
= \veps (\bm{f}, \bm{v}_h)_K, \; \forall\bm{v}_h\in \left[P_k(K)\right]^L \text{ or } \left[Q_k(K)\right]^L.
\end{multline}
Here, the upwind trace $\hat{u}^l_h$ is defined by the formula
\begin{equation*}
 \hat{u}^l_h(\bx)=\lim_{\tau\to 0^+} u^l_h(\bm{x}-\tau\bomega_l),\quad \bx\in \mathcal{E}^\mathrm{i}_h.
\end{equation*}
While other definitions of the numerical trace can be used, the upwind definition allows for increased computational efficiency via sweeping \cite{Larsen2010}.

To obtain a global formulation, we define the bilinear form $\mathfrak{a}_h: \bm{V}_h\times\bm{V}_h\to \mathbb{R}$ by 
\begin{multline}\label{eq:def_a}
\mathfrak{a}_h(\bm{u}_h,\bm{v}_h) = \sum_{e\in \mathcal{E}^\mathrm{i}_h} \int_{e}(\bOmega \cdot \bm{n}_e \hat{\bm{u}}_h)^{\tran} W [\![\bm{v_h}]\!] \ud \bm{x}
- \sum_K (\bm{u}_h, \bOmega \cdot \nabla  \bm{v}_h )_K \\
+ ( \bm{u}_h , \bm{v}_h)_{\partial X_+} + ( Q\bm{u}_h , \bm{v}_h).
\end{multline}
Let
$\bm{V}_{(h)}=\left[H^1_2(X)\right]^L + \bm{V}_{h}$. 
As discussed in Section~\ref{subsec:DO}, elements of $\left[H^1_2(X)\right]^L$ have a well-defined trace. Therefore, $\mathfrak{a}_h$ can be naturally extended to $\bm{V}_{(h)}\times \bm{V}_{h}$. 
After applying another integrating by parts applied to \eqref{eq:def_a}, the bilinear form $\mathfrak{a}_h$ can be rewritten in the following, which will be useful later:
\begin{multline}\label{eq:def_a_2}
\mathfrak{a}_h(\bm{u}_h,\bm{v}_h) := -\sum_{e\in \mathcal{E}^\mathrm{i}_h} \int_{e}\left(\bOmega \cdot \bm{n}_e [\![\bm{u_h}]\!]\right)^{\tran} W \check{\bm{v}}_h \ud \bm{x}
+ \sum_K  (\bOmega \cdot \nabla  \bm{u}_h, \bm{v}_h)_K \\
+ ( \bm{u}_h , \bm{v}_h)_{\partial X_-} + ( Q\bm{u}_h , \bm{v}_h),
\end{multline}
where the downwind trace $\check{u}^l_h$ is defined by
\begin{equation*}
\check{u}^l_h(\bm{x})=\lim\limits_{\tau\to 0^+} u^l_h(\bx+\tau\bomega_l),\quad \bx\in \mathcal{E}^\mathrm{i}_h.
\end{equation*}

The discrete-ordinate DG method can be cast as follows.
\begin{prob}
	Find $\bm{u}_h\in \bm{V}_h$ such that
	\begin{equation}\label{eq:rte_scale_DO_DG}
	 \mathfrak{a}_h(\bm{u}_h,\bm{v}_h) = \ell(\bm{v}_h), \quad \forall \bm{v}_h\in \bm{V}_{h},
	\end{equation}
	where $\mathfrak{a}_h$ and $\ell$ are given in \eqref{eq:def_a} and \eqref{eq:def_ell}, respectively. 
\end{prob}

\begin{lemma}[Stability]\label{lem:a_h_stab}
	For all $\bm{v}_h\in \bm{V}_{h}$,
	\begin{equation}\label{eq:rte_scale_stab}
	\mathfrak{a}_h(\bm{v}_h,\bm{v}_h)  = \|\bm{v}_h\|^2_Q + \frac{1}{2}\|\bm{v}_h\|^2_{\partial X} + \frac{1}{2}\big\|[\![\bm{v_h}]\!]\big\|^2_{\mathcal{E}^\mathrm{i}_h}.
	\end{equation}
\end{lemma}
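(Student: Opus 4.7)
The plan is to exploit the two equivalent representations \eqref{eq:def_a} and \eqref{eq:def_a_2} of the bilinear form $\mathfrak{a}_h$ by setting $\bm{u}_h=\bm{v}_h$ in both and averaging. This is the cleanest way to symmetrize the non-symmetric transport operator and extract a jump-norm contribution from the upwind flux.

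First I would write down $\mathfrak{a}_h(\bm{v}_h,\bm{v}_h)$ using \eqref{eq:def_a} and separately using \eqref{eq:def_a_2}, add the two expressions, and divide by $2$. Inside each element $K$, the volume terms $-(\bm{v}_h,\bOmega\cdot\nabla\bm{v}_h)_K$ and $(\bOmega\cdot\nabla\bm{v}_h,\bm{v}_h)_K$ are equal because $W$ is diagonal, so they cancel on adding. The $Q$-terms add to give $2(Q\bm{v}_h,\bm{v}_h)=2\|\bm{v}_h\|_Q^2$, and the outflow/inflow boundary contributions combine as $(\bm{v}_h,\bm{v}_h)_{\partial X_+}+(\bm{v}_h,\bm{v}_h)_{\partial X_-}=\|\bm{v}_h\|_{\partial X}^2$ by definition of the weighted boundary norm. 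After division by $2$, these contribute exactly $\|\bm{v}_h\|_Q^2+\tfrac12\|\bm{v}_h\|_{\partial X}^2$ to the identity.

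The remaining interior-face contribution from adding the two forms is
\begin{equation*}
\sum_{e\in\mathcal{E}^{\mathrm{i}}_h}\int_e \bigl(\bOmega\cdot\bm{n}_e(\hat{\bm{v}}_h-\check{\bm{v}}_h)\bigr)^{\tran} W [\![\bm{v}_h]\!]\ud\bm{x},
\end{equation*}
where I have used that $W$ is diagonal to move $W$ past the scalar $\bOmega\cdot\bm{n}_e$ and to transpose the second term. The key computation is then a per-ordinate check: for fixed $l$, if $\bomega_l\cdot\bm{n}_e>0$ then $\hat{v}^l_h=v^{l,+}$ and $\check{v}^l_h=v^{l,-}$, so $\hat{v}^l_h-\check{v}^l_h=[\![v^l_h]\!]$; if $\bomega_l\cdot\bm{n}_e<0$ the two traces swap sides and $\hat{v}^l_h-\check{v}^l_h=-[\![v^l_h]\!]$. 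In either case one obtains $(\bomega_l\cdot\bm{n}_e)(\hat{v}^l_h-\check{v}^l_h)=|\bomega_l\cdot\bm{n}_e|\,[\![v^l_h]\!]$, so the interior face sum equals $\|[\![\bm{v}_h]\!]\|^2_{\mathcal{E}^\mathrm{i}_h}$. Dividing by $2$ gives the stated $\tfrac12\|[\![\bm{v}_h]\!]\|^2_{\mathcal{E}^\mathrm{i}_h}$ and completes the identity.

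The only step requiring care is the sign-bookkeeping on interior faces to show that the upwind flux produces $|\bomega_l\cdot\bm{n}_e|$ times $[\![v^l_h]\!]$ regardless of the orientation of $\bm{n}_e$. Everything else (element-wise antisymmetry of $\bOmega\cdot\nabla$ in $W$-weighted $L^2$, recognition of $(Q\bm{v}_h,\bm{v}_h)=\|\bm{v}_h\|_Q^2$, and the additive decomposition of $\|\bm{v}_h\|_{\partial X}^2$) is routine. Alternatively, one could avoid the averaging trick by starting from \eqref{eq:def_a}, integrating by parts element-wise, and collecting face contributions directly, but the symmetrization approach makes the cancellations transparent and keeps the calculation short.
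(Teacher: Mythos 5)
Your proof is correct, and it reaches the identity by a slightly different route than the paper. The paper works directly from the single representation \eqref{eq:def_a}: it invokes the element-wise integration-by-parts identity
\begin{equation*}
\sum_K  (\bm{v}_h, \bOmega \cdot \nabla  \bm{v}_h )_K =  \sum_{e\in \mathcal{E}^\mathrm{i}_h} \big(\bOmega \cdot \bm{n}_e \{\!\!\{\bm{v}_h\}\!\!\}, [\![\bm{v}_h]\!]\big)_e
	+ \tfrac12 \|\bm{v}_h\|^2_{\partial X_+} - \tfrac12 \|\bm{v}_h\|^2_{\partial X_-}
\end{equation*}
together with the flux splitting $\bOmega \cdot \bm{n}_e \hat{\bm{v}}_h = \bOmega \cdot \bm{n}_e \{\!\!\{\bm{v}_h\}\!\!\} + \tfrac12|\bOmega \cdot \bm{n}_e | \, [\![\bm{v}_h]\!]$, so the average terms cancel and only the jump survives. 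You instead average the two representations \eqref{eq:def_a} and \eqref{eq:def_a_2}, which kills the volume advection terms by antisymmetry and reduces the face contribution to the per-ordinate identity $(\bomega_l\cdot\bm{n}_e)(\hat{v}^l_h-\check{v}^l_h)=|\bomega_l\cdot\bm{n}_e|\,[\![v^l_h]\!]$. The two arguments are essentially equivalent in content --- \eqref{eq:def_a_2} is itself obtained from \eqref{eq:def_a} by the same element-wise integration by parts the paper uses, so you are packaging that step into a citation of an identity the paper has already stated --- but your symmetrization avoids introducing the average $\{\!\!\{\cdot\}\!\!\}$ entirely and isolates the only nontrivial bookkeeping in the sign check $\hat{v}-\check{v}=\pm[\![v]\!]$, which you carry out correctly for both orientations of $\bm{n}_e$. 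The paper's version has the minor advantage of being self-contained in \eqref{eq:def_a} alone; yours is arguably cleaner given that \eqref{eq:def_a_2} is already available.
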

\begin{proof}
	By using the relation
	\begin{equation*}
	\sum_K  (\bm{v}_h, \bOmega \cdot \nabla  \bm{v}_h )_K =  \sum_{e\in \mathcal{E}^\mathrm{i}_h} \big(\bOmega \cdot \bm{n}_e \{\!\!\{\bm{v}_h\}\!\!\}, [\![\bm{v_h}]\!]\big)_e
	+ \frac12 \|\bm{v}_h\|^2_{\partial X_+} - \frac12 \|\bm{v}_h\|^2_{\partial X_-}
	\end{equation*}
	and the fact that $\bOmega \cdot \bm{n}_e \hat{\bm{v}}_h = \bOmega \cdot \bm{n}_e \{\!\!\{\bm{v}_h\}\!\!\} + \frac12|\bOmega \cdot \bm{n}_e | \, [\![\bm{v}_h]\!]$, we find that
	\begin{multline*}
	\sum_{e\in \mathcal{E}^\mathrm{i}_h} \big(\bOmega \cdot \bm{n}_e \hat{\bm{v}}_h, [\![\bm{v_h}]\!]\big)_e - \sum_K  (\bm{v}_h, \bOmega \cdot \nabla  \bm{v}_h )_K \\ 
	= \frac12 \sum_{e\in \mathcal{E}^\mathrm{i}_h} \big(|\bOmega \cdot \bm{n}_e| \, [\![\bm{v_h}]\!], [\![\bm{v_h}]\!]\big)_e
	- \frac12 \|\bm{v}_h\|^2_{\partial X_+} + \frac12 \|\bm{v}_h\|^2_{\partial X_-},
	\end{multline*}
	from which \eqref{eq:rte_scale_stab} follows.
\end{proof} 
\section{Error Analysis}\label{sec:asym_error_analysis}
In this section, we establish bounds on the error $\bm{u}-\bm{u}_h$.

\subsection{Assumptions}
We  make the following assumptions
\begin{assumption}\label{assupt:reg_1}
	For some $r>1$, $\bm{u}\in \left[H^r(X)\right]^L$ 
	with
	\begin{equation}\label{eq:reg_assump}
	\|\bm{u}-\overline{\bm{u}}\|_{r,X}\le \sqrt{C\varepsilon^2 + \delta^2\varepsilon},\quad \|\overline{\bm{u}}\|_{r,X}\le \sqrt{C +\frac{\delta^2}{\varepsilon}}.
	\end{equation}
\end{assumption}
\begin{assumption}\label{assupt:reg_2}
	For some $r>0$,
	\begin{equation}
	\|\bm{u}-\overline{\bm{\alpha}}\|_{r,\partial X_+} \le \sqrt{C\varepsilon + \delta^2}, \quad \|\bm{u}-\overline{\bm{\alpha}}\|_{r,\partial X} \le \sqrt{C\varepsilon + \delta^2}.\label{eq:reg_assump_2_b}
	\end{equation}\label{eq:reg_assump_2}
\end{assumption}
\begin{remark}
	From \Cref{cor:est} and setting $\bm{\alpha}_0=\overline{\bm{\alpha}}$, we know that the above assumptions are true when $r=0$. Assuming $\sigma_{\mathrm{t}}(\bx)$, $\sigma_{\mathrm{s}}(\bx)$ and $f(\bx)$ are smooth enough 
	and taking any partial derivative $\partial_i$ ($i=1,2,3$) of \eqref{eq:rte_scale_compact} with respect to $x$, $y$ or $z$, respectively, we have
		\begin{alignat*}{3}
		\bOmega\cdot\nabla(\partial_i\bm{u}) + Q(\partial_i\bm{u}) &= \varepsilon\partial_i\bm{f}-(\partial_i Q)\bm{u}, &~~~&\text{ in }~ X \\
		\partial_i\bm{u} &= \bm{\beta}, &~~~&\text{ on }~ \begin{bmatrix}\partial X_-^1 & \partial X_-^2 &\cdots & \partial X_-^L\end{bmatrix}^{\tran}.
		\end{alignat*}
	where $\beta^l := \lim\limits_{\epsilon\to 0^+}\partial_i u^l(\bx+\epsilon\bomega_l)$ on $\partial X_-^l$. 
	With estimate \eqref{eq:priori_est_1_aniso}, we have
	\begin{align*}
		\frac{1}{\varepsilon}&\|\partial_i\bm{u}-\overline{\partial_i\bm{u}}\|^2 + \varepsilon\|\overline{\partial_i\bm{u}}\|^2 + \|\partial_i\bm{u}-\bm{\beta}_0\|^2_{\partial X} \\ 
		&\le c\left(\|\overline{\partial_i\bm{f}}\|^2 + \|\overline{\bm{u}}\|^2 + \|\bm{u}-\overline{\bm{u}}\|^2 +  \|\bm{\beta}_0\|^2_{1/2, \partial X}\right)\varepsilon + \|\bm{\beta}_1\|^2_{\partial X_-} + \varepsilon^3\|\partial_i\bm{f}-\overline{\partial_i\bm{f}}\|^2.
	\end{align*}
	If $\|\bm{\beta}_1\|^2_{\partial X_-}\approx \|\bm{\alpha}_1\|^2_{\partial X_-}$, then \Cref{cor:est} remains true for $\partial_i\bm{u}$, i.e., Assumptions \ref{assupt:reg_1} and \ref{assupt:reg_2} hold for $r=1$.
\end{remark}
\begin{assumption}\label{assupt:trace}
	$\overline{\bm{\alpha}}$ is the trace of a function in $\bm{V}_h$.
\end{assumption}
\begin{remark}
	\Cref{assupt:trace} is not strictly necessary. 
	If $\overline{\bm{\alpha}}$ is not the trace of a function in $\bm{V}_h$, we can construct an approximation $\overline{\bm{\alpha}}_h$ of $\overline{\bm{\alpha}}$. Let $\bm{u}'$ and $\bm{u}'_h$ be the solutions defined in \eqref{eq:rte_var_form} and \eqref{eq:rte_scale_DO_DG}, respectively, with the isotropic part of the boundary condition replaced by $\overline{\bm{\alpha}}_h$. Then $\|\bm{u}-\bm{u}'_h\| \le \|\bm{u}-\bm{u}'\| + \|\bm{u}'-\bm{u}'_h\|$. It can be shown that $\|\bm{u}-\bm{u}'\|$ is uniformly bounded by $\|\overline{\bm{\alpha}}-\overline{\bm{\alpha}}_h\|$. In fact, set $\bm{w}=\bm{u}-\bm{u}'$. Then $\bm{w}$ satisfies
	\begin{alignat*}{3}
	\bOmega\cdot\nabla\bm{w} + Q\bm{w} &= \bm{0}, &~~~&\text{ in }~ X\\
	\bm{w}&=\overline{\bm{\alpha}}-\overline{\bm{\alpha}}_h, &~~~&\text{ on }~ \begin{bmatrix}\partial X_-^1, & \partial X_-^2, &\cdots, & \partial X_-^L\end{bmatrix}^{\tran}.
	\end{alignat*}
	By employing \Cref{lem:pri_1}, we have
	\[ \|\bm{w}\|\le c(1+\varepsilon)\left\|\overline{\bm{\alpha}}-\overline{\bm{\alpha}}_h\right\|_{1/2,\partial X}. \] 
	Therefore the rest of the paper is unchanged but for perturbations involving $\|\overline{\bm{\alpha}}-\overline{\bm{\alpha}}_h\|_{1/2,\partial X}$. 
\end{remark}

Recall that, by \Cref{lem:cont}, $u^l$ is continuous on $\mathcal{E}^\mathrm{i}_h$ a.e.~if $\bomega_l\cdot\bm{n}\neq 0$, $l=1,\cdots,L$. Therefore, we have
	\begin{equation}\label{eq:cont}
	\bomega_l\cdot\bm{n}\,[\![u^l]\!]=0 \text{ on } \mathcal{E}^\mathrm{i}_h, \text{ a.e.,} \quad l=1,\cdots,L.
	\end{equation}

\subsection{General strategy}

The error analysis for $\bm{u}-\bm{u}_h$ is based on the following lemma, which is similar to the second Strang lemma (cf.~\cite{C2002}) in error analysis of nonconforming element methods. Readers can refer to \cite{HHE2010} for the proof of a scalar version.

\begin{lemma}\label{lem:stab_0}
	Let $\{\bm{W}_h\}_{h>0}:=\left\{[W_h]^L\right\}_{h>0}$ be a family of finite-dimensional product spaces equipped with norms $\{\|\cdot\|_h\}_{h>0}$. Let $\mathfrak{b}_h(\cdot,\cdot)$ be a uniformly coercive bi-linear form over $\bm{W}_h\times \bm{W}_h$, i.e., there exists a positive constant $\gamma$ independent of $h$ such that
	\begin{equation}\label{eq:stab_lem}
	\gamma\|\bm{w}_h\|^2_h\le \mathfrak{b}_h(\bm{w}_h,\bm{w}_h), \quad \forall \bm{w}_h\in \bm{W}_h.
	\end{equation}
	Let $\bm{Z}=[Z]^L$ be the product space of an (infinite dimensional) function space $Z$ and assume $\bm{v}$ is a vector of functions such that $\mathfrak{b}_h(\bm{v},\bm{w}_h)$ is well-defined and $|\mathfrak{b}_h(\bm{v},\bm{w}_h)|<C$ for all $\bm{w}_h\in \bm{W}_h$. Let $\bm{v}_h\in\bm{W}_h$ satisfy
	\begin{equation}\label{eq:rte_scale_lem_1_asspt2}
	\mathfrak{b}_h(\bm{v}-\bm{v}_h,\bm{w}_h)=0, \quad \forall \bm{w}_h\in \bm{W}_h.
	\end{equation}
	Then
	\begin{equation}\label{eq:error_inq1}
	\|\bm{v}-\bm{v}_h\|_h\le \inf_{\bm{w}_h\in \bm{W}_h}\left\{\|\bm{v}-\bm{w}_h\|_h + \frac{1}{\gamma}\sup_{\breve{\bm{w}}_h\in \bm{W}_h}\frac{\mathfrak{b}_h(\bm{v}-\bm{w}_h,\breve{\bm{w}}_h)}{\|\breve{\bm{w}}_h\|_h}\right\}.
	\end{equation}
\end{lemma}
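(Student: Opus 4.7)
The plan is to carry out a standard Céa/Strang-type argument: split $\bm{v}-\bm{v}_h$ through an arbitrary $\bm{w}_h\in \bm{W}_h$ by the triangle inequality, and then use the coercivity of $\mathfrak{b}_h$ together with the Galerkin orthogonality \eqref{eq:rte_scale_lem_1_asspt2} to bound the discrete part $\bm{w}_h-\bm{v}_h$ by the continuous part $\bm{v}-\bm{w}_h$ measured in a dual-style seminorm. Since $\bm{v}_h$ is a conforming Galerkin approximation (it lives in $\bm{W}_h$ and satisfies the orthogonality), no consistency error enters and the estimate reduces to a best-approximation plus best-dual-pairing bound.

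First I would fix an arbitrary $\bm{w}_h\in \bm{W}_h$ and write $\bm{v}-\bm{v}_h=(\bm{v}-\bm{w}_h)+(\bm{w}_h-\bm{v}_h)$, so that the triangle inequality in $\|\cdot\|_h$ yields
\begin{equation*}
\|\bm{v}-\bm{v}_h\|_h \le \|\bm{v}-\bm{w}_h\|_h + \|\bm{w}_h-\bm{v}_h\|_h.
\end{equation*}
Next I would note that $\bm{w}_h-\bm{v}_h\in \bm{W}_h$, so the uniform coercivity \eqref{eq:stab_lem} gives $\gamma\|\bm{w}_h-\bm{v}_h\|_h^2 \le \mathfrak{b}_h(\bm{w}_h-\bm{v}_h,\bm{w}_h-\bm{v}_h)$. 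I would then insert $\pm\bm{v}$ in the first slot and split
\begin{equation*}
\mathfrak{b}_h(\bm{w}_h-\bm{v}_h,\bm{w}_h-\bm{v}_h)=\mathfrak{b}_h(\bm{w}_h-\bm{v},\bm{w}_h-\bm{v}_h)+\mathfrak{b}_h(\bm{v}-\bm{v}_h,\bm{w}_h-\bm{v}_h),
\end{equation*}
where the second term vanishes by Galerkin orthogonality because $\bm{w}_h-\bm{v}_h\in\bm{W}_h$.

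It then remains to bound the first term by the supremum in \eqref{eq:error_inq1}: I would write
\begin{equation*}
\mathfrak{b}_h(\bm{w}_h-\bm{v},\bm{w}_h-\bm{v}_h) \le \sup_{\breve{\bm{w}}_h\in \bm{W}_h}\frac{|\mathfrak{b}_h(\bm{v}-\bm{w}_h,\breve{\bm{w}}_h)|}{\|\breve{\bm{w}}_h\|_h}\,\|\bm{w}_h-\bm{v}_h\|_h,
\end{equation*}
using bilinearity to absorb the minus sign into the absolute value. Cancelling one factor of $\|\bm{w}_h-\bm{v}_h\|_h$ and dividing by $\gamma$ gives the desired bound on $\|\bm{w}_h-\bm{v}_h\|_h$, and combining with the triangle-inequality step above yields \eqref{eq:error_inq1} after taking the infimum over $\bm{w}_h\in \bm{W}_h$.

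There is no real obstacle here: the argument is a textbook conforming-Galerkin best-approximation estimate and the only subtlety is sign bookkeeping in passing from $\mathfrak{b}_h(\bm{w}_h-\bm{v},\cdot)$ to the $|\mathfrak{b}_h(\bm{v}-\bm{w}_h,\cdot)|$ that appears in the stated inequality. The hypothesis that $|\mathfrak{b}_h(\bm{v},\bm{w}_h)|<C$ is used only to guarantee that the Galerkin orthogonality statement and the supremum over $\bm{W}_h$ are meaningful (so that the quotient in \eqref{eq:error_inq1} is finite); it plays no other role in the proof.
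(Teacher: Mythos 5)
Your argument is correct and is exactly the standard second-Strang-lemma proof (triangle inequality through an arbitrary $\bm{w}_h$, coercivity plus Galerkin orthogonality on the discrete part, then the dual-pairing bound), which is the same route the paper takes by deferring to the scalar version in its cited reference rather than reproducing it. The only cosmetic points are that the case $\bm{w}_h=\bm{v}_h$ should be noted as trivial before cancelling the factor $\|\bm{w}_h-\bm{v}_h\|_h$, and that the sign bookkeeping you mention is harmless since $\bm{W}_h$ is a linear space, so the supremum with and without the absolute value coincide.
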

We now construct an upper bound for  \eqref{eq:error_inq1}.
For any $K\in\mathcal{T}_h$, let $\mathcal{I}_h$ be an interpolation operator from $L^2(K)$ onto $P_k(K)$. 
We require that: (i) the restriction of $\mathcal{I}_h$ to any face $e\subset K$ is uniquely determined by the interpolation points on $e$; and (ii) two adjacent elements have the same interpolation points on their shared edge/surface.
Commonly employed nodal finite elements satisfy such requirement (see, e.g., \cite[Chapter 3]{BS2008} or \cite[Theorem 2.2.1]{C2002}). 
Then by a scaling argument and a trace theorem, we can easily obtain the following result (see, e.g., \cite{AH2009,BS2008,C2002}).
\begin{lemma}[Polynomial interpolation error estimate]
	Let $k=\deg(\mathcal{I}_h)$. Then for all $v\in H^r(K)$ with $r>0$ and $K\in \mathcal{T}_h$,
	\begin{equation}\label{eq:interp_est}
	\|(I-\mathcal{I}_h) v\|_{s,K}\lesssim h_K^{\min \{r,k+1\}-s}\|v\|_{r,K}, \; \|(I-\mathcal{I}_h) v\|_{0,\partial K}\lesssim h_K^{\min \{r,k+1\}-1/2}\|v\|_{r,K}.
	\end{equation}
\end{lemma}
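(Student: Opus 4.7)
The proof is entirely standard and proceeds by the classical Bramble--Hilbert/scaling argument; I will only need to supply a sketch since the result is well known. The plan is to transfer the estimate from the physical element $K$ to a fixed reference element $\widehat{K}$, prove it there using the Bramble--Hilbert lemma, and then scale back.

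First, I would introduce the affine map $F_K:\widehat{K}\to K$ with constant Jacobian, and the associated pull-back $\hat v = v\circ F_K$. The regularity of the family $\mathcal{T}_h$ gives the uniform equivalences $|\hat v|_{r,\widehat K}\simeq h_K^{r-d/2}|v|_{r,K}$ for each integer $r\ge 0$, which by interpolation extend to fractional $r>0$. Since $\mathcal{I}_h$ is an affine-equivalent interpolation operator constructed from a fixed reference interpolant $\widehat{\mathcal{I}}:H^r(\widehat K)\to P_k(\widehat K)$ (well-defined for $r>d/2$, or more generally for $r>0$ by choosing a Clément-type variant if needed), we have $\widehat{\mathcal{I}_h v}=\widehat{\mathcal{I}}\hat v$, so the problem reduces to bounding $\|(I-\widehat{\mathcal{I}})\hat v\|_{s,\widehat K}$ and $\|(I-\widehat{\mathcal{I}})\hat v\|_{0,\partial\widehat K}$ in terms of $\|\hat v\|_{r,\widehat K}$.

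Next, on the reference element I would invoke the Bramble--Hilbert lemma: since $I-\widehat{\mathcal{I}}$ is a bounded linear map from $H^r(\widehat K)$ into $H^s(\widehat K)$ (respectively $L^2(\partial\widehat K)$ via the trace theorem) that vanishes on the polynomial space $P_{\min\{r,k+1\}-1}(\widehat K)$ (since $\widehat{\mathcal{I}}$ reproduces polynomials of degree $\le k$ and we can only use the smoothness we have up to order $r$), we get
\begin{equation*}
\|(I-\widehat{\mathcal{I}})\hat v\|_{s,\widehat K}\lesssim |\hat v|_{\min\{r,k+1\},\widehat K}, \qquad
\|(I-\widehat{\mathcal{I}})\hat v\|_{0,\partial\widehat K}\lesssim |\hat v|_{\min\{r,k+1\},\widehat K}.
\end{equation*}
The boundary estimate uses the continuous trace $H^{s'}(\widehat K)\hookrightarrow L^2(\partial\widehat K)$ for $s'>1/2$, combined with the same polynomial-reproduction argument.

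Finally, I would scale back. Using the equivalences above,
\begin{equation*}
\|(I-\mathcal{I}_h)v\|_{s,K}\lesssim h_K^{-(s-d/2)}\|(I-\widehat{\mathcal{I}})\hat v\|_{s,\widehat K}
\lesssim h_K^{-(s-d/2)}\,h_K^{\min\{r,k+1\}-d/2}\|v\|_{r,K},
\end{equation*}
which gives the first bound. The boundary bound picks up the surface scaling $|\partial K|\simeq h_K^{d-1}$ instead of $h_K^d$, producing the shift from $s$ to $1/2$ and hence the exponent $\min\{r,k+1\}-1/2$. The only mildly delicate point is the case $r\le d/2$, where one must ensure that the reference interpolant is defined on $H^r(\widehat K)$; this is guaranteed by our standing assumption on $\mathcal{I}_h$ (nodal interpolation with $r>d/2$, or a quasi-interpolant otherwise), so no additional obstruction arises. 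Everything else is bookkeeping of the scaling exponents, so I would not dwell on the computation.
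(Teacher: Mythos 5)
Your sketch is correct and is exactly the standard scaling-plus-Bramble--Hilbert-plus-trace argument that the paper itself invokes (the paper gives no proof, only the remark ``by a scaling argument and a trace theorem'' with references to \cite{AH2009,BS2008,C2002}). The exponent bookkeeping checks out in both estimates, and you have appropriately flagged the one delicate point --- that nodal interpolation requires $r>d/2$ for point values to be defined, a restriction the lemma's hypothesis $r>0$ glosses over --- so nothing further is needed.
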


Define the norm $|||\cdot|||_{h,\varepsilon}$ over $\bm{V}_{(h)}$ by
\begin{equation}\label{def:norm}
	|||\bm{v}_h|||^2_{h,\varepsilon}:= \varepsilon \mathfrak{a}_h(\bm{v}_h,\bm{v}_h)
	= \varepsilon \left( \| \bm{v}_h\|_Q^{2}
	+ \frac12 \| \bm{v}_h\|_{\partial X}^{2}
	+ \frac12 \big\|[\![\bm{v}_h]\!] \big\|_{\mathcal{E}^\mathrm{i}_h}^{2} \right).
\end{equation}
Here, the scale $\varepsilon$ is applied to compensate for the factor of $1/\varepsilon$ appeared in $\|\cdot\|_Q$. 

Below we establish the conditions of Lemma \ref{lem:stab_0} for the bilinear form $\mathfrak{a}_h(\cdot,\cdot)$, the norm $|||\cdot|||_{h,\varepsilon}$, and $\gamma= 1/\varepsilon$.
By \eqref{eq:dg_local} and the definitions \eqref{eq:def_a} and \eqref{eq:def_ell}, we have
\begin{equation}
    \mathfrak{a}_h(\bm{u},\bm{v}_h)=\ell(\bm{v}_h), \quad \forall \bm{v}_h\in \bm{V}_h.
\end{equation}
Subtract \eqref{eq:rte_scale_DO_DG} from the above equality to obtain the Galerkin orthogonality
\begin{equation}
    \mathfrak{a}_h(\bm{u}-\bm{u}_h,\bm{v}_h)=0, \quad  \forall \bm{v}_h\in \bm{V}_h.
\end{equation}
By \Cref{lem:a_h_stab} and \eqref{def:norm}, we have
\begin{equation}
	|||\bm{v}_h|||_{h,\varepsilon}^2 = \varepsilon \mathfrak{a}_h(\bm{v}_h,\bm{v}_h), \quad \forall\bm{v}_h\in \bm{V}_h.
\end{equation}
As a result,
\begin{equation}\label{eq:error_interp}
||| \bm{u}-\bm{u}_h|||_{h,\varepsilon} \le |||\bm{u}-\mathcal{I}_h \bm{u}|||_{h,\varepsilon} + \varepsilon\sup_{\bm{v}_h\in\bm{V}_h}\frac{\mathfrak{a}_h(\bm{u}-\mathcal{I}_h \bm{u},\bm{v}_h)}{|||\bm{v}_h|||_{h,\varepsilon}}.
\end{equation}

\subsection{Bounds for the terms in the right hand side of \texorpdfstring{\eqref{eq:error_interp}}{(45)}} 

\begin{lemma}\label{lem:interp_err}
	If Assumptions \ref{assupt:reg_1} and \ref{assupt:reg_2} hold, and further assume that, for $k=0$, $\bm{u}$ is Lipschitz continuous,
	then we have
	\begin{equation}\label{eq:interp_err}
	|||\bm{u}-\mathcal{I}_h \bm{u}|||_{h,\varepsilon} \lesssim \left\{\begin{array}{ll}  \sqrt{C'\varepsilon + 3(C\varepsilon^2 + \delta^2\varepsilon)}\, h^{\min\{r,1\}}, & \text{if } k=0,\\
	 \sqrt{3(C\varepsilon^2 + \delta^2\varepsilon)}\, h^{\min\{r,k+1\}}, & \text{if } k\ge 1.
	\end{array}\right. 
	\end{equation}
\end{lemma}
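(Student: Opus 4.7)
The plan is to estimate each of the three contributions defining $|||\bm{u}-\mathcal{I}_h\bm{u}|||_{h,\varepsilon}^2$ in \eqref{def:norm}, namely $\varepsilon\|\bm{u}-\mathcal{I}_h\bm{u}\|_Q^2$, $\tfrac{\varepsilon}{2}\|\bm{u}-\mathcal{I}_h\bm{u}\|_{\partial X}^2$, and $\tfrac{\varepsilon}{2}\big\|[\![\bm{u}-\mathcal{I}_h\bm{u}]\!]\big\|_{\mathcal{E}^{\mathrm{i}}_h}^2$ separately, then to sum and take square roots.

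For the $Q$-norm contribution I would use the identity $\varepsilon\|\bm{v}\|_Q^2=\|\sigma_{\mathrm{t}}^{1/2}(\bm{v}-\overline{\bm{v}})\|^2+\varepsilon^2\|\sigma_{\mathrm{a}}^{1/2}\overline{\bm{v}}\|^2$ with $\bm{v}=\bm{u}-\mathcal{I}_h\bm{u}$. The key observation is that $\mathcal{I}_h$ commutes with the angular average $\bm{v}\mapsto\overline{\bm{v}}$, since both act componentwise and linearly; hence $(\bm{u}-\mathcal{I}_h\bm{u})-\overline{(\bm{u}-\mathcal{I}_h\bm{u})}=(I-\mathcal{I}_h)(\bm{u}-\overline{\bm{u}})$ and $\overline{\bm{u}-\mathcal{I}_h\bm{u}}=(I-\mathcal{I}_h)\overline{\bm{u}}$. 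Applying \eqref{eq:interp_est} to each piece and invoking Assumption~\ref{assupt:reg_1} controls the anisotropic part by $h^{2\min\{r,k+1\}}(C\varepsilon^2+\delta^2\varepsilon)$ and the isotropic part by $\varepsilon^2 h^{2\min\{r,k+1\}}(C+\delta^2/\varepsilon)=h^{2\min\{r,k+1\}}(C\varepsilon^2+\delta^2\varepsilon)$; the two combine to give the required bound for $\varepsilon\|\bm{u}-\mathcal{I}_h\bm{u}\|_Q^2$.

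For the boundary term I would exploit Assumption~\ref{assupt:trace}: pick a lift $\overline{\bm{\alpha}}^e\in\bm{V}_h$ of $\overline{\bm{\alpha}}$. Since $\mathcal{I}_h$ fixes elements of $\bm{V}_h$, a linear rearrangement gives $\bm{u}-\mathcal{I}_h\bm{u}=(I-\mathcal{I}_h)(\bm{u}-\overline{\bm{\alpha}}^e)$, which restricts on $\partial X$ to $(I-\mathcal{I}_h)(\bm{u}-\overline{\bm{\alpha}})$. Property (i) of $\mathcal{I}_h$ ensures that its action on each face is a surface nodal interpolation, so a face-wise interpolation estimate together with Assumption~\ref{assupt:reg_2} yields $\|\bm{u}-\mathcal{I}_h\bm{u}\|_{\partial X}^2\lesssim h^{2\min\{r,k+1\}}(C\varepsilon+\delta^2)$; multiplying by $\varepsilon/2$ matches the required rate.

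For the interior jump term, \eqref{eq:cont} forces $[\![\bm{u}]\!]=0$ in the weighted $\|\cdot\|_{\mathcal{E}^{\mathrm{i}}_h}$ norm, so only $[\![\mathcal{I}_h\bm{u}]\!]$ needs to be controlled. When $k\ge 1$, property (ii) of $\mathcal{I}_h$ places its nodes on interior faces symmetrically for neighboring elements; since \eqref{eq:cont} forces the two one-sided traces of $\bm{u}$ to agree at those nodes, the two one-sided interpolants are the same polynomial on the face and $[\![\mathcal{I}_h\bm{u}]\!]=0$ identically. The $k=0$ case is the main obstacle: adjacent piecewise constants need not agree, and one must invoke the assumed Lipschitz continuity of $\bm{u}$ to obtain $|[\![\mathcal{I}_h u^l]\!]|\lesssim h$ pointwise, with the subsequent summation over faces absorbed into the $C'\varepsilon$ prefactor of \eqref{eq:interp_err}. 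Combining the three bounds and taking square roots yields \eqref{eq:interp_err}.
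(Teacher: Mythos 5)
Your proposal is correct and follows essentially the same route as the paper's proof: the same three-term decomposition of $|||\cdot|||_{h,\varepsilon}$, the same splitting of the $Q$-norm term via the commutation of $\mathcal{I}_h$ with angular averaging, the same use of $\mathcal{I}_h\overline{\bm{\alpha}}=\overline{\bm{\alpha}}$ for the boundary term, and the same face-wise interpolation/continuity argument (with the Lipschitz fallback for $k=0$) for the jump term. The only difference is that you make explicit a few steps the paper leaves implicit (the commutation identity and the lift of $\overline{\bm{\alpha}}$), which is a welcome clarification rather than a deviation.
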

\begin{proof}
By the definition of $|||\cdot|||_{h,\varepsilon}$,
\begin{equation}
	|||\bm{u}-\mathcal{I}_h \bm{u}|||_{h,\varepsilon}^2
	= \varepsilon \| \bm{u}-\mathcal{I}_h \bm{u} \|_Q^2
	+ \frac{\varepsilon}{2} \| \bm{u}-\mathcal{I}_h \bm{u} \|_{\partial X}^2
	+ \frac{\varepsilon}{2} \big\| [\![\bm{u}-\mathcal{I}_h \bm{u}]\!] \big\|_{\mathcal{E}^\mathrm{i}_h}^2.
\end{equation}
We will estimate each term on the right-hand side of the above equality. 

For the first term, the assumption in \eqref{eq:reg_assump} and the interpolation estimate \eqref{eq:interp_est} imply that
\begin{align}\label{eq:Q_eq:interp_err}
\| \bm{u}-\mathcal{I}_h \bm{u} \|_Q^2
	&= \frac{1}{\varepsilon}\|\sigma_{\mathrm{t}}^{1/2}(I-\mathcal{I}_h)(\bm{u}-\overline{\bm{u}})\|^2 + \varepsilon \|\sigma_{\mathrm{a}}^{1/2}(I-\mathcal{I}_h)\overline{\bm{u}}\|^2\nonumber\\
	&\le \frac{1}{\varepsilon}h^{2\min\{r,k+1\}}(C\veps^2 + \delta^{2}\varepsilon) + \varepsilon h^{2\min\{r,k+1\}}(C+\frac{\delta^2}{\varepsilon}) \nonumber\\
	&\leq  2h^{2\min\{r,k+1\}} (C\veps + \delta^{2}).
\end{align}
For the second term, since $\mathcal{I}_h \overline{\alpha}=\overline{\alpha}$, and by \eqref{eq:reg_assump_2_b}, we have
\begin{align}
\varepsilon\| \bm{u}-\mathcal{I}_h \bm{u} \|_{\partial X}^2 &= \varepsilon\| (I-\mathcal{I}_h)(\bm{u}-\overline{\bm{\alpha}}) \|_{\partial X}^2 
\lesssim h^{2\min\{r,k+1\}} \varepsilon\| \bm{u}-\overline{\bm{\alpha}} \|_{r,\partial X}^2 \nonumber \\
&\lesssim \varepsilon(C\varepsilon + \delta^2)  h^{2\min\{r,k+1\}}.
\end{align}

For the third term, let us denote by $K_1$ and $K_2$ the two elements sharing a surface $e$, and for a scalar component $u^l$ of $\bm{u}$, write $u^l_1:=u^l|_{K_1}$ and $u^l_2:=u^l|_{K_2}$. When $k\ge 1$,  we notice that, for the nodal-based interpolation $\mathcal{I}_h$, the interpolation on each surface for an element is uniquely determined by the interpolation nodes. 
Therefore, we have 
\begin{equation}\label{eq:int_face_0}
	\big|[\![(I-\mathcal{I}_h)u^l]\!]\big|_e
	= |(I-\mathcal{I}_h)(u^l_1-u^l_2) \big|_e 
	   =|(I-\tilde{\mathcal{I}}_h)[\![u^l]\!]|_e, 
\end{equation}
where $\tilde{\mathcal{I}}_h$ is the interpolation operator on $e$. 
Therefore, by \eqref{eq:cont}, for $k\ge 1$,
\begin{equation}\label{eq:int_face_1}
	\int_e |\bomega_l\cdot\bm{n}|\,|[\![u^l-\mathcal{I}_h u^l]\!]|^2\ud\bx
	= \int_e |\bomega_l\cdot\bm{n}|\,|(I-\tilde{\mathcal{I}}_h)[\![u^l]\!]|^2\ud\bx\ 
	= 0.
\end{equation}
However, for $k=0$, $\mathcal{I}_h u^l$ is a piecewise constant, additional discontinuities with order of $O(1)$ are created along each $e\in \mathcal{E}^\mathrm{i}_h$. Owing to the assumption that $u^l$ is Lipschitz continuous, we can further have $|[\![(I-\mathcal{I}_h)u^l]\!]| \le \sqrt{C'}h$, where $C'$ is a constant. Therefore, for $k=0$, 
\begin{equation}
	\sum_{l=1}^L w_l \sum_{e\in \mathcal{E}^\mathrm{i}_h}\int_e |\bomega_l\cdot\bm{n}|\,|[\![u^l-\mathcal{I}_h u^l]\!]|^2\ud\bx
	= \sum_{e\in \mathcal{E}^\mathrm{i}_h}\sum_{l=1}^L w_l \int_e |\bomega_l\cdot\bm{n}|\,|[\![(I-\mathcal{I}_h) u^l]\!]|^2\ud\bx 
	\lesssim C' h^2,
\end{equation}
from which the third term can be estimated by
\begin{equation}
	\varepsilon\big\| [\![\bm{u}-\mathcal{I}_h \bm{u}]\!] \big\|_{\mathcal{E}^\mathrm{i}_h}^2 = \varepsilon\sum_{l=1}^L w_l \sum_{e\in \mathcal{E}^\mathrm{i}_h}\int_e |\bomega_l\cdot\bm{n}|\,\big|[\![u^l-\mathcal{I}_h u^l]\!]\big|^2\ud\bx \left\{\begin{array}{ll}
	\le \varepsilon C' h^2, & \text{if } k=0,\\
	=0, & \text{if } k\ge 1.
	\end{array}\right. 
\end{equation}
Therefore
\begin{equation}
	|||\bm{u}-\mathcal{I}_h \bm{u}|||_{h,\varepsilon}^2\lesssim \left\{\begin{array}{ll}  C'\varepsilon h^2 + 3(C\varepsilon^2 + \delta^2\varepsilon)\, h^{2\min\{r,1\}}, & \text{if } k=0,\\
	3(C\varepsilon^2 + \delta^2\varepsilon)\, h^{2\min\{r,k+1\}}, & \text{if } k\ge 1,
	\end{array}\right.
\end{equation}\
from which \eqref{eq:interp_err} follows.
\end{proof}

\begin{lemma} \label{lem:interp_bound}
	Under Assumptions~\ref{assupt:reg_1} and \ref{assupt:reg_2}, for $k\ge 1$, 
	\begin{multline}\label{eq:interp_bound}
		\varepsilon|\mathfrak{a}_h(\bm{u}-\mathcal{I}_h \bm{u},\bm{v}_h)|
		\lesssim
		h^{\min\{r,k+1\}}3 (C\varepsilon^2 + \delta^2\varepsilon)^{1/2}|||\bm{v}|||_{h,\varepsilon} \\
		+  h^{\min\{r,k+1\}-1} \left((C\varepsilon^4 + \delta^2\varepsilon^3)^{1/2} + 2\left(C\varepsilon^2 + \delta^2\varepsilon\right)^{1/2}\right)|||\bm{v}|||_{h,\varepsilon}.
	\end{multline}
\end{lemma}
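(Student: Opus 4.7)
The plan is to work from the alternative representation \eqref{eq:def_a_2} of $\mathfrak{a}_h$ applied to $\bm{\eta} := \bm{u}-\mathcal{I}_h\bm{u}$. The argument of \eqref{eq:int_face_0}--\eqref{eq:int_face_1} rewrites $[\![\eta^l]\!] = (I-\tilde{\mathcal{I}}_h)[\![u^l]\!]$ on each interior face, and \eqref{eq:cont} gives $\bomega_l\cdot\bm{n}_e[\![u^l]\!] = 0$ almost everywhere, so when $k\ge 1$ the interior-face sum in \eqref{eq:def_a_2} vanishes, leaving
\begin{equation*}
\mathfrak{a}_h(\bm{\eta},\bm{v}_h) = \underbrace{\sum_K(\bOmega\cdot\nabla\bm{\eta},\bm{v}_h)_K}_{\mathrm{I}} + \underbrace{(\bm{\eta},\bm{v}_h)_{\partial X_-}}_{\mathrm{II}} + \underbrace{(Q\bm{\eta},\bm{v}_h)}_{\mathrm{III}},
\end{equation*}
each to be bounded after multiplication by $\varepsilon$.

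The central technical device is the isotropic/anisotropic splitting $\bm{\eta} = (\bm{\eta}-\overline{\bm{\eta}})+\overline{\bm{\eta}}$ and $\bm{v}_h = (\bm{v}_h-\overline{\bm{v}}_h)+\overline{\bm{v}}_h$, combined with the quadrature identity $\sum_{l=1}^L w_l\bomega_l = \bm{0}$. The norm $|||\cdot|||_{h,\varepsilon}$ controls the two components of $\bm{v}_h$ asymmetrically, namely $\|\bm{v}_h-\overline{\bm{v}}_h\| \lesssim |||\bm{v}_h|||_{h,\varepsilon}$ but only $\|\overline{\bm{v}}_h\| \lesssim \varepsilon^{-1}|||\bm{v}_h|||_{h,\varepsilon}$; dually, \Cref{assupt:reg_1} delivers $\|\bm{u}-\overline{\bm{u}}\|_{r,X} \lesssim \sqrt{C\varepsilon^2+\delta^2\varepsilon}$ against the much larger $\|\overline{\bm{u}}\|_{r,X}\lesssim \sqrt{C+\delta^2/\varepsilon}$. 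Every pairing must therefore be arranged so that each appearance of an isotropic component $\overline{\bm{v}}_h$ is compensated by either a small moment of $\bm{u}$ or the outer factor of $\varepsilon$. The quadrature identity provides the indispensable cancellation: $(\bOmega\cdot\nabla\overline{\bm{\eta}},\overline{\bm{v}}_h)=0$ identically, and the mixed pairing $(\bOmega\cdot\nabla\bm{\eta},\overline{\bm{v}}_h)$ collapses to $\int_X\nabla\cdot\bm{j}_\eta\,\overline{v}_h\,d\bx$ with the current $\bm{j}_\eta := (I-\mathcal{I}_h)\sum_l w_l\bomega_l(u^l-\overline{u})$ inheriting the small first-moment bound.

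With these tools the three terms are routine. For $\mathrm{III}$, Cauchy--Schwarz in $(\cdot,\cdot)_Q$ together with the bound $\|\bm{\eta}\|_Q^2 \lesssim h^{2\min\{r,k+1\}}(C\varepsilon+\delta^2)$ derived inside the proof of \Cref{lem:interp_err} yields $\varepsilon|\mathrm{III}| \lesssim h^{\min\{r,k+1\}}\sqrt{C\varepsilon^2+\delta^2\varepsilon}\,|||\bm{v}_h|||_{h,\varepsilon}$, with both the $\varepsilon^{-1}$ and the $\varepsilon$ pieces of $\|\cdot\|_Q^2$ each contributing one copy. For $\mathrm{II}$, using $\mathcal{I}_h\overline{\bm{\alpha}} = \overline{\bm{\alpha}}$ (\Cref{assupt:trace}) to write $\bm{\eta}|_{\partial X_-} = (I-\mathcal{I}_h)(\bm{u}-\overline{\bm{\alpha}})$, applying the face interpolation estimate with \Cref{assupt:reg_2}, and bounding $\|\bm{v}_h\|_{\partial X}\le\sqrt{2/\varepsilon}\,|||\bm{v}_h|||_{h,\varepsilon}$ gives another copy; together these furnish the three copies of $\sqrt{C\varepsilon^2+\delta^2\varepsilon}$ in the first bracket of \eqref{eq:interp_bound}. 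For $\mathrm{I}$, the iso-iso pairing vanishes by the cancellation above; the anisotropic-anisotropic pairing $(\bOmega\cdot\nabla(\bm{\eta}-\overline{\bm{\eta}}),\bm{v}_h-\overline{\bm{v}}_h)$ combines two small factors and produces $h^{\min\{r,k+1\}-1}\sqrt{C\varepsilon^4+\delta^2\varepsilon^3}\,|||\bm{v}_h|||_{h,\varepsilon}$ after the $\varepsilon$ scaling; and the two mixed pairings, treated by Cauchy--Schwarz with the interpolation error $\|\nabla(I-\mathcal{I}_h)\bm{w}\|\lesssim h^{\min\{r,k+1\}-1}\|\bm{w}\|_{r,X}$ applied to either $\overline{\bm{u}}$ or the moment $\sum_l w_l\bomega_l(u^l-\overline{u})$, each contribute $h^{\min\{r,k+1\}-1}\sqrt{C\varepsilon^2+\delta^2\varepsilon}\,|||\bm{v}_h|||_{h,\varepsilon}$, producing the second bracket.

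The main obstacle is the term $(\bOmega\cdot\nabla\overline{\bm{\eta}},\overline{\bm{v}}_h)$: under the asymmetric scalings above, a blind Cauchy--Schwarz would give a contribution of order $\sqrt{C+\delta^2/\varepsilon}\cdot\varepsilon^{-1}\cdot\varepsilon = \sqrt{C+\delta^2/\varepsilon}$, which diverges as $\varepsilon\to 0$ whenever $\delta>0$ and would destroy uniformity. It is only the exact cancellation $\sum_l w_l\bomega_l=\bm{0}$ that eliminates this term, and the same identity reorganizes the remaining mixed pairings so that the large $\overline{\bm{u}}$ always meets a small factor and the large $\overline{\bm{v}}_h$ always meets a first moment of $\bm{u}$. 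Once these reorganizations are in place, the argument reduces to bookkeeping of $h$-powers, $\varepsilon$-powers, and constants from \Cref{assupt:reg_1,assupt:reg_2}.
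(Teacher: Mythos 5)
Your proposal follows essentially the same route as the paper's proof: the representation \eqref{eq:def_a_2} with the interior-face term vanishing for $k\ge 1$, the isotropic/anisotropic splitting of both $\bm{u}-\mathcal{I}_h\bm{u}$ and $\bm{v}_h$ with the quadrature cancellation $\sum_l w_l\bomega_l=\bm{0}$ killing the iso--iso advection pairing, \Cref{assupt:trace} for the boundary term, and the same bookkeeping of the asymmetric bounds $\|\bm{v}_h-\overline{\bm{v}_h}\|\lesssim|||\bm{v}_h|||_{h,\varepsilon}$ versus $\|\overline{\bm{v}_h}\|\lesssim\varepsilon^{-1}|||\bm{v}_h|||_{h,\varepsilon}$. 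The term-by-term contributions you tally reproduce the two brackets of \eqref{eq:interp_bound} exactly, so the argument is correct and matches the paper's.
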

\begin{proof}
Using the definition in \eqref{eq:def_a_2}, we write $\mathfrak{a}_h(\bm{u}-\mathcal{I}_h \bm{u},\bm{v}_h)= \mathrm{I} +\Pi + \mathrm{III} + \mathrm{IV}$,
where
\begin{align}\label{eq:a_decomp_1}
	\mathrm{I} &:=-\sum_{e\in \mathcal{E}^\mathrm{i}_h} \int_{e}\left(\bOmega \cdot \bm{n}_e [\![\bm{u} - \mathcal{I}_h \bm{u}]\!]\right)^{\tran} W \check{\bm{v}}_h \ud \bm{x}, \\
	\mathrm{II} &:= \sum_K  \int_{K}  \left(\bOmega \cdot \nabla  (\bm{u} - \mathcal{I}_h \bm{u})\right)^{\tran} W \bm{v}_h  \ud\bm{x} = \sum_K\left(\bOmega \cdot \nabla  (\bm{u} - \mathcal{I}_h \bm{u}),\, \bm{v}_h\right)_K, \\
	\mathrm{III} &:= ( \bm{u} - \mathcal{I}_h \bm{u} , \bm{v}_h)_{\partial X_-}, \quad 
	\mathrm{IV} :=  (\bm{u} - \mathcal{I}_h \bm{u} , \bm{v}_h)_Q.
\end{align}
To bound $\mathrm{I}$, we have, for $k\ge 1$, similar to \eqref{eq:int_face_1},
\begin{equation}\label{eq:error_I}
	\mathrm{I} = -\sum_{e\in \mathcal{E}^\mathrm{i}_h} \int_{e}\left(\bOmega \cdot \bm{n}_e \, (I-\tilde{\mathcal{I}}_h)[\![ \bm{u}]\!]\right)^{\tran} W \check{\bm{v}}_h \ud \bm{x} = 0.
\end{equation} 
The second term can be handled as follows:
\begin{align}
	|	\mathrm{II}|
		&\le \sum_K \big|\left(\bOmega\cdot\nabla (\bm{u}-\mathcal{I}_h \bm{u} - \overline{\bm{u}-\mathcal{I}_h \bm{u}} +\overline{\bm{u}-\mathcal{I}_h \bm{u}}), \,\bm{v}_{h}-\overline{\bm{v}_h}+\overline{\bm{v}_h}\right)_K\big|\nonumber\\
		&= \sum_K\big|\left(\bOmega\cdot\nabla (\bm{u}-\mathcal{I}_h \bm{u} - \overline{\bm{u}-\mathcal{I}_h \bm{u}}),\,\bm{v}_{h}-\overline{\bm{v}_h}\right)_K \nonumber \\
		&\phantom{= } \quad + \left(\bOmega\cdot\nabla (\bm{u}-\mathcal{I}_h \bm{u} - \overline{\bm{u}-\mathcal{I}_h \bm{u}}),\,\overline{\bm{v}_h}\right)_K\nonumber\\
		&\phantom{= } \quad + \left(\bOmega\cdot\nabla (\overline{\bm{u}-\mathcal{I}_h \bm{u}}),\,\bm{v}_{h}-\overline{\bm{v}_h}\right)_K + \underbrace{\left(\bOmega\cdot\nabla(\overline{\bm{u}-\mathcal{I}_h \bm{u}}),\,\overline{\bm{v}_h}\right)_K}_{\substack{=0 \text{ since }\overline{u^l-\mathcal{I}_h u^l}\\ \text{ and }\overline{v_h}\text{ are isotropic}}}\big|\nonumber\\
		&= \sum_K \big|\left(\bOmega\cdot\nabla (I-\mathcal{I}_h)(\bm{u}-\overline{\bm{u}}),\,\bm{v}_{h}-\overline{\bm{v}_h}\right)_K + \left(\bOmega\cdot\nabla (I-\mathcal{I}_h)(\bm{u}-\overline{\bm{u}}),\,\overline{\bm{v}_h}\right)_K\nonumber\\
		&\phantom{= } \quad + \left(\bOmega\cdot\nabla (I-\mathcal{I}_h)\overline{\bm{u}}, \,\bm{v}_{h}-\overline{\bm{v}_h}\right)_K\big| \nonumber\\
		&\le \sum_K \|\bOmega\cdot\nabla (I-\mathcal{I}_h)(\bm{u} - \overline{\bm{u}})\|_K\,\|\bm{v}_h-\overline{\bm{v}_h}\|_K + \sum_K \|\bOmega\cdot\nabla (I-\mathcal{I}_h)(\bm{u} - \overline{\bm{u}})\|_K\,\|\overline{\bm{v}_h}\|_K \nonumber\\
		&\phantom{= } \quad + \sum_K \|\bOmega\cdot\nabla(I-\mathcal{I}_h)(\overline{\bm{u}})\| \,\|\bm{v}_h-\overline{\bm{v}_h}\|_K\nonumber\\
		&\lesssim \frac{1}{h}h^{\min\{r,k+1\}}\left[C\varepsilon^2 + \delta^2\varepsilon\right]^{1/2}\|\bm{v}_h-\overline{\bm{v}_h}\|
		+\frac{1}{h}h^{\min\{r,k+1\}}\left[C\varepsilon^2 + \delta^2\varepsilon\right]^{1/2} \|\overline{\bm{v}_h}\|\nonumber\\
		&\phantom{= }\quad  +\frac{1}{h}h^{\min\{r,k+1\}}\left[C+\frac{\delta^2}{\varepsilon}\right]^{1/2}\|\bm{v}_h-\overline{\bm{v}_h}\| \nonumber\\
		&\lesssim \frac{1}{h}h^{\min\{r,k+1\}}\left[C\varepsilon^2 + \delta^2\varepsilon\right]^{1/2} |||\bm{v}|||_{h,\varepsilon}
		+\frac{1}{h}h^{\min\{r,k+1\}}\left[C+\frac{\delta^2}{\varepsilon}\right]^{1/2} |||\bm{v}|||_{h,\varepsilon} \nonumber\\
		&\phantom{= }\quad  +\frac{1}{h}h^{\min\{r,k+1\}}\left[C+\frac{\delta^2}{\varepsilon}\right]^{1/2} |||\bm{v}|||_{h,\varepsilon} \nonumber\\
		&\lesssim h^{\min\{r,k+1\}-1} \left((C\varepsilon^2 + \delta^2\varepsilon)^{1/2} + \left(C+\frac{\delta^2}{\varepsilon}\right)^{1/2} + \left(C+\frac{\delta^2}{\varepsilon}\right)^{1/2}\right) |||\bm{v}|||_{h,\varepsilon}\nonumber\\
		&\lesssim \frac{1}{\varepsilon} h^{\min\{r,k+1\}-1} \left((C\varepsilon^4 + \delta^2\varepsilon^3)^{1/2} + 2\left(C\varepsilon^2+ \varepsilon\delta^2\right)^{1/2} \right) |||\bm{v}|||_{h,\varepsilon}.
\end{align}
The third term is bounded using \Cref{assupt:trace}, which implies that $\mathcal{I}_h \overline{\alpha}(\bx)=\overline{\alpha}(\bx)$.  Hence
\begin{align}
	|\mathrm{III}| & =
	\big|\big((\bm{u}-\overline{\bm{\alpha}}) + \underbrace{(I-\mathcal{I}_h)\overline{\bm{\alpha}}}_{\substack{=0 \text{ based on} \\ \text{\Cref{assupt:trace}}}} + \mathcal{I}_h(\overline{\bm{\alpha}}-\bm{u}),\,\bm{v}_{h}\big)_{\partial X_-}\big|\nonumber\\
	&= \big|\big((I-\mathcal{I}_h) (\bm{u}-\overline{\bm{\alpha}}),\, \bm{v}_{h}\big)_{\partial X_-}\big|\nonumber\\
	&\le h^{\min\{r,k+1\}}\|\bm{u}-\overline{\bm{\alpha}}\|_{r,\partial X_-}\|\bm{v}_h\|_{\partial X_-}\nonumber\\
	&\le \frac{1}{\varepsilon} h^{\min\{r,k+1\}}\sqrt{C\varepsilon + \delta^2}\sqrt{\varepsilon}|||\bm{v}_h|||_{h,\varepsilon}.
\end{align}
The fourth term is bounded using \eqref{eq:Q_eq:interp_err}:
\begin{equation}\label{eq:error_IV}
	|\mathrm{IV}|
	\le \|\bm{u}-\mathcal{I}_h\bm{u}\|_Q \|\bm{v}_h\|_Q 
	\le \frac{1}{\varepsilon}h^{\min\{r,k+1\}}\sqrt{2(C\veps^2 + \delta^{2}\varepsilon)}|||\bm{v}|||_{h,\varepsilon}.
\end{equation}
By  combining \eqref{eq:error_I} -- \eqref{eq:error_IV}, \eqref{eq:interp_bound} follows.
\end{proof}

\begin{lemma} \label{lem:interp_bound_k=0}
	Under Assumptions~\ref{assupt:reg_1} and \ref{assupt:reg_2}, for $k=0$, 
	\begin{multline}\label{eq:interp_bound_k=0}
	\varepsilon|\mathfrak{a}_h(\bm{u}-\mathcal{I}_h \bm{u},\bm{v}_h)|
	\lesssim
	h^{\min\{r,1\}} 3 (C\varepsilon^2 + \delta^2\varepsilon)^{1/2}|||\bm{v}|||_{h,\varepsilon} \\
	+h^{\min\{r,1\}-1/2} \Big(\sqrt{C\varepsilon^3 + \delta^2\varepsilon^2} + \sqrt{C\varepsilon+\delta} \Big)|||\bm{v}|||_{h,\varepsilon} \\
	+  h^{\min\{r,1\}-1}\left((C\varepsilon^4 + \delta^2\varepsilon^3)^{1/2} + 2\left(C\varepsilon^2 + \delta^2\varepsilon\right)^{1/2}\right)|||\bm{v}|||_{h,\varepsilon}.
	\end{multline}
\end{lemma}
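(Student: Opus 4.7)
The plan is to use the same four-term decomposition
\[ \mathfrak{a}_h(\bm{u}-\mathcal{I}_h\bm{u},\bm{v}_h) = \mathrm{I} + \mathrm{II} + \mathrm{III} + \mathrm{IV} \]
as in the proof of Lemma \ref{lem:interp_bound}, obtained from \eqref{eq:def_a_2}, and to bound each summand separately. The only genuinely new work is in term $\mathrm{I}$: in the $k\ge 1$ case it vanishes because the nodal interpolant satisfies $[\![\mathcal{I}_h \bm{u}]\!]\equiv 0$ on interior faces, but for piecewise constants the jumps persist and must be handled explicitly.

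For terms $\mathrm{II}$, $\mathrm{III}$, $\mathrm{IV}$, I would essentially copy the computations from the proof of Lemma \ref{lem:interp_bound} with $k$ replaced by $0$; none of those arguments actually used $k\ge 1$. They rely on the isotropic/anisotropic splitting $\bm{u}=\overline{\bm{u}}+(\bm{u}-\overline{\bm{u}})$ and $\bm{v}_h=\overline{\bm{v}_h}+(\bm{v}_h-\overline{\bm{v}_h})$ (so that cross terms in which both factors are isotropic vanish by $\sum_l w_l\bomega_l=\bm{0}$), combined with \eqref{eq:interp_est}, Assumption~\ref{assupt:reg_1}, Assumption~\ref{assupt:trace}, and the bound \eqref{eq:Q_eq:interp_err} already established. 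These produce the first and third summands of \eqref{eq:interp_bound_k=0}.

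The new contribution from $\mathrm{I}$ yields the middle $h^{\min\{r,1\}-1/2}$ summand. Because \Cref{lem:cont} gives \eqref{eq:cont}, one has $[\![\bm{u}-\mathcal{I}_h\bm{u}]\!]=-[\![\mathcal{I}_h\bm{u}]\!]$ a.e.\ where the integrand is nonzero, so the trace form of \eqref{eq:interp_est}, applied separately to $\bm{u}-\overline{\bm{u}}$ and $\overline{\bm{u}}$ under Assumption~\ref{assupt:reg_1}, gives
\[ \|[\![\bm{u}-\mathcal{I}_h\bm{u}]\!]\|_{\mathcal{E}^\mathrm{i}_h} \;\lesssim\; h^{\min\{r,1\}-1/2}\Big(\sqrt{C\veps^{2}+\delta^{2}\veps}\;+\;\sqrt{C+\delta^{2}/\veps}\,\Big). \]
I would then decompose the downwind trace as $\check{v}_h^l=\{\!\!\{v_h^l\}\!\!\}-\tfrac12\operatorname{sgn}(\bomega_l\cdot\bm{n}_e)[\![v_h^l]\!]$ and pair each piece with this bound by Cauchy--Schwarz. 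The jump piece, controlled by $\|[\![\bm{v}_h]\!]\|_{\mathcal{E}^\mathrm{i}_h}^{2}\le 2\veps^{-1}|||\bm{v}_h|||_{h,\veps}^{2}$, yields precisely the second summand in \eqref{eq:interp_bound_k=0} after multiplication by $\veps$. The average piece is handled by the discrete trace inequality $\|\{\!\!\{\bm{v}_h\}\!\!\}\|_{\mathcal{E}^\mathrm{i}_h}^{2}\lesssim h^{-1}\|\bm{v}_h\|^{2}$ together with $\|\bm{v}_h\|^{2}\lesssim \veps^{-2}|||\bm{v}_h|||_{h,\veps}^{2}$ (via \eqref{eq:norm_equiv_1}) and is absorbed into the $h^{\min\{r,1\}-1}$ summand already produced by $\mathrm{II}$.

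The main obstacle is tracking the $\veps$-scaling sharply through $\mathrm{I}$: a brute-force Cauchy--Schwarz of $\|[\![\bm{u}-\mathcal{I}_h\bm{u}]\!]\|_{\mathcal{E}^\mathrm{i}_h}$ against $\|\check{\bm{v}}_h\|_{\mathcal{E}^\mathrm{i}_h}$ loses the fact that the $\overline{\bm{u}}$-piece is isotropic while the $\bm{u}-\overline{\bm{u}}$-piece is $O(\veps)$-small. Preserving the correct $\veps$-powers requires (i) the isotropic/anisotropic splitting of both $\bm{u}$ and $\bm{v}_h$, (ii) cancellation of the iso/iso cross term in the average part via $\sum_l w_l\bomega_l=\bm{0}$, and (iii) measuring the anisotropic component in the $Q$-norm via \eqref{eq:norm_equiv_1} rather than the plain $L^2$ norm. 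Once this bookkeeping is in place, summing the four bounds produces \eqref{eq:interp_bound_k=0}.
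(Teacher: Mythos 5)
Your proposal is correct, but it takes a genuinely different route from the paper on the one term that is actually new for $k=0$, namely the interior-face term. The paper does not keep the representation \eqref{eq:def_a_2}: for $k=0$ it switches back to the form \eqref{eq:def_a}, so that the face term reads $\big((I-\mathcal{I}_h)\widehat{\bm{u}},[\![\bm{v}_h]\!]\big)_{\mathcal{E}^\mathrm{i}_h}$, i.e.\ the (one-sided) upwind trace of the interpolation error is paired directly against the jump of $\bm{v}_h$, which is controlled by $\|[\![\bm{v}_h]\!]\|_{\mathcal{E}^\mathrm{i}_h}\le\sqrt{2/\veps}\,|||\bm{v}_h|||_{h,\veps}$; the average $\{\!\!\{\bm{v}_h\}\!\!\}$ never appears, and the trace interpolation estimate applied separately to $\bm{u}-\overline{\bm{u}}$ and $\overline{\bm{u}}$ gives the middle $h^{\min\{r,1\}-1/2}$ summand in one line. (Correspondingly, the paper's volume term is $-(\bm{u}-\mathcal{I}_h\bm{u},\bOmega\cdot\nabla\bm{v}_h)_K$, handled by an inverse inequality on $\bm{v}_h$, and its boundary term sits on $\partial X_+$ rather than $\partial X_-$.) Your route---keeping \eqref{eq:def_a_2} and splitting $\check{\bm{v}}_h$ into average plus jump---reaches the same bound, but be aware that the refinements you list at the end are load-bearing, not optional: a naive Cauchy--Schwarz of the average piece using $\|\{\!\!\{\bm{v}_h\}\!\!\}\|_{\mathcal{E}^\mathrm{i}_h}\lesssim h^{-1/2}\|\bm{v}_h\|\lesssim h^{-1/2}\veps^{-1}|||\bm{v}_h|||_{h,\veps}$ produces, after multiplying by $\veps$, a contribution $h^{\min\{r,1\}-1}\sqrt{C+\delta^2/\veps}\,|||\bm{v}_h|||_{h,\veps}$, which is a factor $\veps^{-1}$ too large for the third summand of \eqref{eq:interp_bound_k=0}; only after splitting both $\bm{u}$ and $\bm{v}_h$ into isotropic and anisotropic parts, cancelling the iso/iso face term via $\sum_l w_l\bomega_l=\bm{0}$, and measuring $\bm{v}_h-\overline{\bm{v}_h}$ through the $Q$-norm does the average piece land inside the stated $h^{\min\{r,1\}-1}$ term. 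In short, the paper buys a one-line face estimate by changing the representation of $\mathfrak{a}_h$, at the cost of re-deriving terms $\mathrm{II}$--$\mathrm{IV}$ in the other form; you reuse the $k\ge1$ computations for $\mathrm{II}$--$\mathrm{IV}$ essentially verbatim but pay with an extra discrete trace inequality and a face-level cancellation. Both arguments are valid and yield \eqref{eq:interp_bound_k=0}.
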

\begin{proof}
	Using the definition \eqref{eq:def_a} of $\mathfrak{a}_h$, we write
	$\mathfrak{a}_h(\bm{u}-\mathcal{I}_h \bm{u},\bm{v}_h)= \mathrm{I} +\Pi + \mathrm{III} + \mathrm{IV}$,
	where
	\begin{align}\label{eq:a_decomp_1_k=0}
	\mathrm{I} &:=\big((I-\mathcal{I}_h) \widehat{\bm{u}},\, [\bm{v}_{h}]\big)_{\mathcal{E}^\mathrm{i}_h}, \\
	\mathrm{II} &:= - \sum_K  \int_{K}  (\bm{u} - \mathcal{I}_h \bm{u})^{\tran} W (\bOmega \cdot \nabla  \bm{v}_h )  \ud\bm{x}, \\
	\mathrm{III} &:=  ( \bm{u} - \mathcal{I}_h \bm{u} , \bm{v}_h)_{\partial X_+}, \quad 
	\mathrm{IV} :=  (\bm{u} - \mathcal{I}_h \bm{u} , \bm{v}_h)_Q.
	\end{align}
	To bound $\mathrm{I}$, we have
	\begin{align}
	|\mathrm{I}|
	&= \big|\big((I-\mathcal{I}_h) \widehat{(\bm{u}-\overline{\bm{u}})},\, [\bm{v}_{h}]\big)_{\mathcal{E}^\mathrm{i}_h} + \big((I-\mathcal{I}_h) \widehat{\overline{\bm{u}}},\, [\bm{v}_{h}]\big)_{\mathcal{E}^\mathrm{i}_h}\big|\nonumber\\
	&\le \|(I-\mathcal{I}_h)(\bm{u}-\overline{\bm{u}})\|_{\mathcal{E}^\mathrm{i}_h} \|[\bm{v}_{h}]\|_{\mathcal{E}^\mathrm{i}_h} + \|(I-\mathcal{I}_h)\overline{\bm{u}}\|_{\mathcal{E}^\mathrm{i}_h} \|[\bm{v}_{h}]\|_{\mathcal{E}^\mathrm{i}_h} \nonumber\\
	&\le h^{\min\{r,1\}-1/2} \big(\|\bm{u}-\overline{\bm{u}}\|_{r,X} + \|\overline{\bm{u}}\|_{r,X}\big)\|[\bm{v}_{h}]\|_{\mathcal{E}^\mathrm{i}_h} \nonumber\\
	&\le \frac{1}{\varepsilon}h^{\min\{r,1\}-1/2} \Big(\sqrt{C\varepsilon^3 + \delta^2\varepsilon^2} + \sqrt{C\varepsilon+\delta} \Big)|||\bm{v}|||_{h,\varepsilon}.\label{eq:error_I_k=0}
	\end{align}
	The second term can be handled using an inverse inequality:
	\begin{align}
	|\mathrm{II}|
	&= \sum_K\Big|\left(\bm{u}-\mathcal{I}_h \bm{u} - \overline{\bm{u}-\mathcal{I}_h \bm{u}},\,\bOmega\cdot\nabla (\bm{v}_{h}-\overline{\bm{v}_h})\right)_K \nonumber \\
	&\phantom{= } \qquad + \left(\bm{u}-\mathcal{I}_h \bm{u} - \overline{\bm{u}-\mathcal{I}_h \bm{u}},\,\bOmega\cdot\nabla \overline{\bm{v}_h}\right)_K\nonumber\\
	&\phantom{= } \qquad + \left(\overline{\bm{u}-\mathcal{I}_h \bm{u}},\,\bOmega\cdot\nabla (\bm{v}_{h}-\overline{\bm{v}_h})\right)_K + \underbrace{\left(\overline{\bm{u}-\mathcal{I}_h \bm{u}},\,\bOmega\cdot\nabla\overline{\bm{v}_h}\right)_K}_{\substack{=0 \text{ since }\overline{u^l-\mathcal{I}_h u^l}\\ \text{ and }\overline{v_h}\text{ are isotropic}}}\Big|\nonumber\\
	&\le \sum_K \Big(\|(I-\mathcal{I}_h)(\bm{u} - \overline{\bm{u}})\|_K\,\|\bOmega\cdot\nabla (\bm{v}_h-\overline{\bm{v}_h})\|_K \nonumber\\
	&\phantom{= } \quad + \|(I-\mathcal{I}_h)(\bm{u} - \overline{\bm{u}})\|_K\,\|\bOmega\cdot\nabla \overline{\bm{v}_h}\|_K + \|(I-\mathcal{I}_h)(\overline{\bm{u}})\|_K\,\|\bOmega\cdot\nabla (\bm{v}_h-\overline{\bm{v}_h})\|_K \Big)\nonumber\\
	&\le \frac{1}{h}\sum_K \Big(\|(I-\mathcal{I}_h)(\bm{u} - \overline{\bm{u}})\|_K\,\|\bm{v}_h-\overline{\bm{v}_h}\|_K + \|(I-\mathcal{I}_h)(\bm{u} - \overline{\bm{u}})\|_K\,\|\overline{\bm{v}_h}\|_K \nonumber\\
	&\phantom{= } \quad + \|(I-\mathcal{I}_h)(\overline{\bm{u}})\|_K\,\|\bm{v}_h-\overline{\bm{v}_h}\|_K\Big) \nonumber\\
	&\lesssim \frac{1}{\varepsilon} h^{\min\{r,k+1\}-1} \left((C\varepsilon^4 + \delta^2\varepsilon^3)^{1/2} + 2\left(C\varepsilon^2+ \varepsilon\delta^2\right)^{1/2}\right) |||\bm{v}|||_{h,\varepsilon}.
	\end{align}
	For the third term, similar to the third term in \Cref{lem:interp_bound}, since $\mathcal{I}_h \overline{\alpha}(\bx)=\overline{\alpha}(\bx)$, we have
	\begin{equation}
	|\mathrm{III}| 
	= \big|\big((I-\mathcal{I}_h) (\bm{u}-\overline{\bm{\alpha}}),\, \bm{v}_{h}\big)_{\partial X_+}\big|
	\le \frac{1}{\varepsilon} h^{\min\{r,k+1\}}\sqrt{C\varepsilon^2 + \delta^2\varepsilon}|||\bm{v}_h|||_{h,\varepsilon}.
	\end{equation}	
	The fourth term is the same as the one in \Cref{lem:interp_bound}, which can be estimated by:
	\begin{equation}\label{eq:error_IV_k=0}
	|\mathrm{IV}| \le \frac{1}{\varepsilon}h^{\min\{r,k+1\}}\sqrt{2(C\veps^2 + \delta^{2}\varepsilon)}|||\bm{v}|||_{h,\varepsilon}.
	\end{equation}
	Hence, by combining \eqref{eq:error_I_k=0} -- \eqref{eq:error_IV_k=0}, \eqref{eq:interp_bound_k=0} follows.
\end{proof}

\subsection{Summary of results}
The final estimate for $||| \bm{u}-\bm{u}_h|||_{h,\varepsilon}$ follows from \eqref{eq:error_interp}, \Cref{lem:interp_err}, \Cref{lem:interp_bound}, and \Cref{lem:interp_bound_k=0}. 

\begin{theorem}\label{thm:main_theorem}
If Assumptions \ref{assupt:reg_1} and \ref{assupt:reg_2} hold, and further assume that for $k=0$, $\bm{u}$ is Lipschitz continuous,  then we have, for $k=0$,
\begin{multline} \label{eq:est_total_general_k=0}
	||| \bm{u}-\bm{u}_h|||_{h,\varepsilon} \lesssim \sqrt{C'}\varepsilon^{1/2}h + h^{\min\{r,1\}}\left(6(C\varepsilon^2 + \delta^2\varepsilon)^{1/2}\right)  \\
	+h^{\min\{r,1\}-1/2} \Big(\sqrt{C\varepsilon^3 + \delta^2\varepsilon^2} + \sqrt{C\varepsilon+\delta} \Big) \\
	+ h^{\min\{r,1\}-1} \left((C\varepsilon^4 + \delta^2\varepsilon^3)^{1/2} + 2\left(C\varepsilon^2+ \delta^2\varepsilon\right)^{1/2}\right);
\end{multline}
for  $k\ge 1$,
\begin{multline}\label{eq:est_total_general_k=1}
	||| \bm{u}-\bm{u}_h|||_{h,\varepsilon} \lesssim h^{\min\{r,k+1\}} 6 (C\varepsilon^2 + \delta^2\varepsilon)^{1/2} \\
	+ h^{\min\{r,k+1\}-1} \left((C\varepsilon^4 + \delta^2\varepsilon^3)^{1/2} + 2\left(C\varepsilon^2+ \delta^2\varepsilon\right)^{1/2}\right).
\end{multline}
\end{theorem}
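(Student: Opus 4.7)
The statement is essentially a bookkeeping exercise: everything needed is already in place, and one just has to assemble the pieces in the right order. The plan is to start from the abstract error splitting \eqref{eq:error_interp}, which was itself obtained as an application of the second Strang-type lemma (\Cref{lem:stab_0}) using the coercivity constant $\gamma = 1/\varepsilon$ coming from the definition of $|||\cdot|||_{h,\varepsilon}$ in \eqref{def:norm}. After that, the only work is to substitute the bounds on the two terms on the right-hand side of \eqref{eq:error_interp}.

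First, I would bound the interpolation term $|||\bm{u}-\mathcal{I}_h\bm{u}|||_{h,\varepsilon}$ directly by invoking \Cref{lem:interp_err}, distinguishing the two cases $k=0$ (where the extra Lipschitz hypothesis is needed to control the jump term that comes from piecewise-constant interpolation, producing the $\sqrt{C'}\,\varepsilon^{1/2}h$ contribution) and $k\ge 1$ (where jumps of $\mathcal{I}_h\bm{u}$ across interior faces vanish thanks to nodal continuity, cf.~\eqref{eq:int_face_1}). Second, I would bound the consistency term $\varepsilon\sup_{\bm{v}_h\in \bm{V}_h}\mathfrak{a}_h(\bm{u}-\mathcal{I}_h\bm{u},\bm{v}_h)/|||\bm{v}_h|||_{h,\varepsilon}$ by applying \Cref{lem:interp_bound} when $k\ge 1$ and \Cref{lem:interp_bound_k=0} when $k=0$. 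Multiplying the right-hand sides of those lemmas by $\varepsilon$ is important: it cancels the $1/\varepsilon$ prefactors that appeared there (which themselves came from the coercivity constant $1/\varepsilon$), and this cancellation is exactly what produces the uniform-in-$\varepsilon$ structure of the final estimate.

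Third, I would simply add the two contributions. For $k\ge 1$, the interpolation term contributes $h^{\min\{r,k+1\}}\sqrt{C\varepsilon^2+\delta^2\varepsilon}$ (absorbed into the leading $6$-constant term), and the consistency term contributes the $h^{\min\{r,k+1\}-1}$ term with coefficient $(C\varepsilon^4+\delta^2\varepsilon^3)^{1/2}+2(C\varepsilon^2+\delta^2\varepsilon)^{1/2}$. For $k=0$, the same procedure yields one additional $h^{\min\{r,1\}-1/2}$-term coming from the jump-by-average term $\mathrm{I}$ in \Cref{lem:interp_bound_k=0}, plus the explicit $\sqrt{C'}\,\varepsilon^{1/2}h$ contribution from the interpolation jump that is absent when $k\ge 1$. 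Combining yields \eqref{eq:est_total_general_k=0} and \eqref{eq:est_total_general_k=1}, respectively.

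The main obstacle is not conceptual but notational: one has to be careful that all the $\varepsilon$ factors, the $\delta^2$ terms, and the $h$-exponents track correctly through the multiplication by $\varepsilon$, and that the subleading terms from \Cref{lem:interp_err} are indeed dominated by the consistency contributions (up to constants), so that the final statement can be written with the clean structure displayed in \eqref{eq:est_total_general_k=0}--\eqref{eq:est_total_general_k=1}. Since this is purely arithmetic once the three lemmas are invoked, the proof should be short — essentially a one-line appeal to the three results followed by an application of \eqref{eq:error_interp}.
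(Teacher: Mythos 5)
Your proposal is correct and follows exactly the route the paper takes: its proof of \Cref{thm:main_theorem} is precisely the one-line combination of the splitting \eqref{eq:error_interp} with \Cref{lem:interp_err} for the interpolation term and \Cref{lem:interp_bound} (for $k\ge 1$) or \Cref{lem:interp_bound_k=0} (for $k=0$) for the consistency term. Your accounting of where each contribution originates --- the $\sqrt{C'}\varepsilon^{1/2}h$ term from the piecewise-constant jump, the extra $h^{\min\{r,1\}-1/2}$ term from the trace term $\mathrm{I}$ when $k=0$, and the cancellation of the $1/\varepsilon$ prefactors by the factor $\varepsilon$ --- is accurate.
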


The definition of $||| \bm{u}-\bm{u}_h|||_{h,\varepsilon}$ norm implies the following $L^2$ error bounds.
\begin{corollary}
Under the assumptions of \Cref{thm:main_theorem}, for $k=0$,
\begin{multline}\label{eq:err_anis_k=0}
	\left\| \bm{u}-\bm{u}_h \right\| \lesssim \sqrt{C'}(\varepsilon^{1/2}+\varepsilon^{-1/2})h + 6h^{\min\{r,1\}}\left((C\varepsilon^2 + \delta^2\varepsilon)^{1/2}+(C + \delta^2\varepsilon^{-1})^{1/2}\right)  \\
	+h^{\min\{r,1\}-1/2} \Big(\sqrt{C\varepsilon^3 + \delta^2\varepsilon^2} + 2\sqrt{C\varepsilon+\delta} + \sqrt{C\varepsilon^{-1}+\delta\varepsilon^{-2}} \Big) \\
	+ h^{\min\{r,1\}-1} \left((C\varepsilon^4 + \delta^2\varepsilon^3)^{1/2} + 3\left(C\varepsilon^2+ \delta^2\varepsilon\right)^{1/2} + 2\left(C+ \delta^2\varepsilon^{-1}\right)^{1/2}\right);
\end{multline}%
for $k\ge 1$,
\begin{multline}\label{eq:err_anis_k=1}
	\left\| \bm{u}-\bm{u}_h \right\| \lesssim 6 h^{\min\{r,k+1\}} \left((C\varepsilon^2 + \delta^2\varepsilon)^{1/2} + (C + \delta^2\varepsilon^{-1})^{1/2} \right) \\
	+ h^{\min\{r,k+1\}-1} \left((C\varepsilon^4 + \delta^2\varepsilon^3)^{1/2} + 3\left(C\varepsilon^2+ \delta^2\varepsilon\right)^{1/2} + 2\left(C+ \delta^2\varepsilon^{-1}\right)^{1/2}\right).
\end{multline}
If, in addition the boundary condition is isotropic, i.e. $\delta = 0$, then
$k=0$,
\begin{multline}\label{eq:err_iso_k=0}
	\left\| \bm{u}-\bm{u}_h \right\| \lesssim (\varepsilon^{1/2}+\varepsilon^{-1/2})h + h^{\min\{r,1\}}\left( \varepsilon + 1 \right) \\
	+ h^{\min\{r,1\}-1/2}\left(\varepsilon^{3/2} + \varepsilon^{1/2} + \varepsilon^{-1/2}\right) + h^{\min\{r,1\}-1}\left(\varepsilon^2 + \varepsilon + 1\right);
\end{multline}
for $k\ge 1$,
\begin{equation}\label{eq:err_iso_k=1}
	\left\| \bm{u}-\bm{u}_h \right\| \lesssim  h^{\min\{r,k+1\}}\left(\varepsilon + 1\right) + h^{\min\{r,k+1\}-1}\left(\varepsilon^2 + \varepsilon + 1\right).
\end{equation}
\end{corollary}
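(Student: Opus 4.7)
The plan is to transfer the energy-norm bound in \Cref{thm:main_theorem} to an $L^2$ bound for $\bm{u}-\bm{u}_h$ by exploiting the internal structure of $\|\cdot\|_Q$. Recall that
\[
\|\bm{v}\|_Q^2 = \tfrac{1}{\varepsilon}\|\sigma_{\mathrm{t}}^{1/2}(\bm{v}-\overline{\bm{v}})\|^2 + \varepsilon\|\sigma_{\mathrm{a}}^{1/2}\overline{\bm{v}}\|^2,
\]
so that $|||\bm{v}|||_{h,\varepsilon}^2 \ge \varepsilon\|\bm{v}\|_Q^2 = \|\sigma_{\mathrm{t}}^{1/2}(\bm{v}-\overline{\bm{v}})\|^2 + \varepsilon^2\|\sigma_{\mathrm{a}}^{1/2}\overline{\bm{v}}\|^2$. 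Using the lower bounds on $\sigma_{\mathrm{t}}$ and $\sigma_{\mathrm{a}}$ (cf.~\eqref{as:rte_scale_1_0} and its counterpart for $\sigma_{\mathrm{t}}$), the $|||\cdot|||_{h,\varepsilon}$ norm controls the anisotropic part $\bm{v}-\overline{\bm{v}}$ uniformly in $\varepsilon$, while only controlling the isotropic part $\overline{\bm{v}}$ up to a factor of $\varepsilon^{-1}$.

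Concretely, I would split $\bm{u}-\bm{u}_h = \big((\bm{u}-\bm{u}_h)-\overline{\bm{u}-\bm{u}_h}\big) + \overline{\bm{u}-\bm{u}_h}$, apply the triangle inequality, and establish the two bounds
\[
\|(\bm{u}-\bm{u}_h)-\overline{\bm{u}-\bm{u}_h}\| \lesssim |||\bm{u}-\bm{u}_h|||_{h,\varepsilon}, \qquad \|\overline{\bm{u}-\bm{u}_h}\| \lesssim \varepsilon^{-1}|||\bm{u}-\bm{u}_h|||_{h,\varepsilon}
\]
by the observation above. Combining these gives the master inequality
\[
\|\bm{u}-\bm{u}_h\| \lesssim (1+\varepsilon^{-1})\,|||\bm{u}-\bm{u}_h|||_{h,\varepsilon}.
\]

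With this in hand, the remainder is algebraic: substitute the two bounds from \Cref{thm:main_theorem} (one for $k=0$, one for $k\ge 1$) into the right-hand side and distribute the factor $(1+\varepsilon^{-1})$ across each summand. Using the identity $\varepsilon^{-1}(C\varepsilon^2+\delta^2\varepsilon)^{1/2} = (C+\delta^2\varepsilon^{-1})^{1/2}$ and analogous manipulations on the higher-power terms produces exactly the $(C\varepsilon^2+\delta^2\varepsilon)^{1/2}+(C+\delta^2\varepsilon^{-1})^{1/2}$ pairings displayed in \eqref{eq:err_anis_k=0} and \eqref{eq:err_anis_k=1}. The small discrepancies in numerical coefficients (e.g.\ the ``$2$'' appearing in front of $\sqrt{C\varepsilon+\delta}$ for $k=0$) can be absorbed using $\sqrt{a+b}\le \sqrt{a}+\sqrt{b}$ together with $\delta \le \sqrt{\delta}$ whenever $\delta \le 1$.

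For the final two inequalities \eqref{eq:err_iso_k=0}--\eqref{eq:err_iso_k=1}, I would simply specialize to $\delta=0$ and collect powers of $\varepsilon$. The main obstacle is purely bookkeeping: keeping track of how each of the $\varepsilon$-weighted summands in the energy norm bound transforms under the extra $\varepsilon^{-1}$ factor, and matching them to the form stated in the corollary. No new analytical ingredients are required beyond the stability identity \eqref{eq:rte_scale_stab} and the observation above about the $\|\cdot\|_Q$ norm.
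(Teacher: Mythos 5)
Your proposal is correct and is exactly the computation the paper intends: the corollary is stated as following directly from the definition of $|||\cdot|||_{h,\varepsilon}$, and the term-by-term pairing in \eqref{eq:err_anis_k=0}--\eqref{eq:err_iso_k=1} (each summand of \Cref{thm:main_theorem} acquiring a partner with an extra factor $\varepsilon^{-1}$) confirms that the intended route is precisely your master inequality $\|\bm{u}-\bm{u}_h\|\lesssim(1+\varepsilon^{-1})|||\bm{u}-\bm{u}_h|||_{h,\varepsilon}$ obtained from $\varepsilon\|\cdot\|_Q^2=\|\sigma_{\mathrm{t}}^{1/2}(\cdot-\overline{\,\cdot\,})\|^2+\varepsilon^2\|\sigma_{\mathrm{a}}^{1/2}\overline{\,\cdot\,}\|^2$. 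Your identification of the two small implicit points --- the need for a positive lower bound on $\sigma_{\mathrm{t}}$ (not stated explicitly in \eqref{as:rte_scale_1}) and the absorption $\sqrt{C\varepsilon+\delta^2}\le\sqrt{C\varepsilon+\delta}$ for $\delta\le 1$ behind the coefficient $2$ in the $k=0$ case --- is accurate and does not change the argument.
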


It is well know that $V_h^0$ (the function space that consists of piecewise constants) does not achieve the diffusion limit \cite{LMM1987}. 
This fact is reflected by several terms in \cref{eq:err_anis_k=0} that scales poorly with $\varepsilon$.  Even when $\delta = 0$, several of these terms remain. For $k \geq 1$ the only ``bad'' term in \eqref{eq:err_anis_k=1} is the $O(\delta \varepsilon^{-1/2})$ term that appears due to the anisotropy in the boundary condition.  However, even when the boundary condition is isotropic ($\delta=0$), the established convergence rate is not quite optimal. In the next section, we show that the rate can be made optimal for one-dimensional slab geometries and isotropic boundary conditions.  We also discuss an approach to control the boundary layer error when $\delta \ne 0$.

\section{Error analysis for the case of one-dimension slab geometry}\label{sec:1d_AP_error_analysis}
The error analysis above follows the framework developed in \Cref{sec:asym_error_analysis} and is applicable to fairly general settings. However, in one-dimensional slab geometries, better convergence results can be derived. Since it is already known that DG methods do not perform well when $k=0$, we focus here on the case $k\ge 1$.

In slab geometry \cite{LM1984}, the RTE takes the form
\begin{subequations}\label{eq:rte_scale_1d_all}
	\begin{align}
	\mu\frac{\partial u}{\partial x}+\frac{\sigma_{\mathrm{t}}(x)}{\varepsilon} u &=  \left(\frac{\sigma_{\mathrm{t}}(x)}{\varepsilon}-\varepsilon\sigma_{\mathrm{a}}(x)\right) \int^{1}_{-1} u(x,\hat{\mu})\ud\hat{\mu} + \varepsilon f,\quad x\in I:=(a,b),\\
	u(a,\mu)&=\alpha_{\mathrm{l}}(\mu) \text{ if } \mu>0,\quad u(b,\mu)=\alpha_{\mathrm{r}}(\mu) \text{ if } \mu<0.
	\end{align}
\end{subequations}
where $\mu\in[-1,1]$ is the $x$-coordinate of $\bomega$, $u=u(x,\mu)$, $f=f(x)$, and $\ud\hat{\mu}$ is the normalized measure on $(-1,1)$.  The discrete-ordinate equation can still be written in the form \eqref{eq:rte_scale_compact} so that all the notations and formulas in \Cref{sec:settings} can be kept.

\subsection{Error analysis}

Following \cite{CDG2008}, our analysis relies on the Radau projection $\mathcal{R}_h$.  Given a direction $\mu_l  \ne 0 $%
\footnote{In slab geometries, quadratures with the ordinate $\mu_l=0$ are rarely used in practice; even so, $\mathcal{R}_h$ can be defined as the usual $L^2$-orthogonal  projection in this case, since the advection terms vanish.} and an interval $I = (a,b)$, let $x^{\mathrm{out}}_I = (b+a)/2 + \operatorname{sgn}(\mu_l)(b-a)/2$ be the outflow point of $I$.  Then for $k\ge 1$, $\mathcal{R}_h$ is uniquely defined by the conditions
\begin{subequations}\label{eq:Radau_proj}
	\begin{align}
	(\mathcal{R}_h u-u,v)_I &=0, \quad \forall v\in P^{k-1}(I), \label{eq:Radau_proj_a}\\
	\mathcal{R}_h u(x^{\mathrm{out}}_{I}) &= u(x^{\mathrm{out}}_{I}).\label{eq:Radau_proj_b}
	\end{align}
\end{subequations}

\begin{lemma}[see, e.g., \cite{CDG2008}]
	Assume $u\in H^{r}(I)$. Then on each interval $I$,
	\begin{equation}\label{}
	\|u-\mathcal{R}_h u\|_{I}\le C h^{\min \{r,k+1\}}\|u\|_{r,I},
	\end{equation}
	where $C$ depends only on $k$.
\end{lemma}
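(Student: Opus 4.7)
The plan is to follow the classical route for polynomial projection error estimates: reduce to a reference interval via affine scaling, verify that the reference operator is a well-defined projection onto $P^k$, invoke the Bramble--Hilbert lemma, and scale back. First I would introduce the affine map $\Phi:\hat{I}\to I$ with $\hat{I}=(0,1)$ and $\Phi(\hat{x}) = a + h_I \hat{x}$, $h_I = b-a$, and define a reference projection $\hat{\mathcal{R}}:H^r(\hat{I})\to P^k(\hat{I})$ by the analogues of \eqref{eq:Radau_proj_a}--\eqref{eq:Radau_proj_b}, with outflow point $\hat{x}^{\mathrm{out}}$ corresponding to $x^{\mathrm{out}}_I$ under $\Phi$. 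Since $P^k$ is invariant under affine transformations and the $L^2$ orthogonality conditions transform covariantly, one checks that $\mathcal{R}_h u \circ \Phi = \hat{\mathcal{R}}(u\circ\Phi)$, so it suffices to establish the estimate on $\hat{I}$ with a constant depending only on $k$.

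Next I would verify that $\hat{\mathcal{R}}$ is well-defined. This amounts to showing that if $\hat{p}\in P^k(\hat{I})$ satisfies $(\hat{p},v)_{\hat{I}}=0$ for every $v\in P^{k-1}(\hat{I})$ and $\hat{p}(\hat{x}^{\mathrm{out}})=0$, then $\hat{p}\equiv 0$. The orthogonality conditions force $\hat{p}$ to be proportional to the degree-$k$ Legendre polynomial shifted to $\hat{I}$, whose values at the endpoints are $\pm 1$, so the interpolation condition forces the constant to vanish. Because every $\hat{p}\in P^k(\hat{I})$ trivially satisfies both defining conditions, $\hat{\mathcal{R}}$ acts as the identity on $P^k(\hat{I})$.

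Then I would invoke boundedness and Bramble--Hilbert. For $r>1/2$, the Sobolev embedding $H^r(\hat{I})\hookrightarrow C(\hat{I})$ makes the point-evaluation functional continuous, and the $L^2$ moment functionals are manifestly continuous on $H^r(\hat{I})$; composing with the finite-dimensional inversion yields a bounded $\hat{\mathcal{R}}:H^r(\hat{I})\to L^2(\hat{I})$ with norm depending only on $k$. Since $I-\hat{\mathcal{R}}$ annihilates $P^k(\hat{I})$, the Bramble--Hilbert lemma gives
\begin{equation*}
\|\hat{u} - \hat{\mathcal{R}} \hat{u}\|_{L^2(\hat{I})} \le C\, |\hat{u}|_{H^{\min\{r,k+1\}}(\hat{I})}.
\end{equation*}
Scaling back via $\|v\|_{L^2(I)} = h_I^{1/2}\|\hat{v}\|_{L^2(\hat{I})}$ and $|v|_{H^s(I)} = h_I^{1/2-s}|\hat{v}|_{H^s(\hat{I})}$, with interpolation between integer orders if $r$ is fractional, yields the stated bound.

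The only real technical point is the well-posedness of $\hat{\mathcal{R}}$, which hinges on the Legendre polynomial observation above; once past that, everything is routine. A secondary caveat is that the point-evaluation condition implicitly requires $r>1/2$ so that the defining value $u(x_I^{\mathrm{out}})$ makes sense; for $k\ge 1$ and the typical target regularities of interest this causes no difficulty, and for less regular $u$ one could replace the pointwise condition by an averaged substitute without changing the conclusion.
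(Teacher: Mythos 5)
Your proposal is correct: the affine reduction to a reference interval, the unisolvence argument via the degree-$k$ Legendre polynomial (whose nonzero endpoint values make the homogeneous square system trivial), polynomial reproduction, boundedness of the point-evaluation functional for $r>1/2$, Bramble--Hilbert, and the $h_I^{1/2}$ versus $h_I^{1/2-s}$ scaling all fit together as claimed. The paper itself offers no proof of this lemma --- it simply cites the literature --- and your argument is precisely the standard one underlying the cited estimate, including the correct caveat that the pointwise outflow condition requires $r>1/2$ (harmless here, since the paper invokes the lemma under Assumption 3.1 with $r>1$).
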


We apply \Cref{lem:stab_0} with $\bm{w}_h=\mathcal{R}_h\bm{u}$ and use the $\|\cdot\|_Q$ norm. Then
\begin{equation}\label{eq:error_interp_2}
\| \bm{u}-\bm{u}_h\|_{Q} \le \|\bm{u}-\mathcal{R}_h \bm{u}\|_{Q} + \sup_{\bm{v}_h\in\bm{V}_h}\frac{\mathfrak{a}_h(\bm{u}-\mathcal{R}_h \bm{u},\bm{v}_h)}{\|\bm{v}_h\|_{Q}}.
\end{equation}
An estimate for the first term on the right-hand side of \eqref{eq:error_interp_2} is already obtained in \Cref{lem:interp_err}, i.e.,
\begin{equation}\label{eq:interp_err_1d}
	\|\bm{u}-\mathcal{R}_h \bm{u}\|_{Q} \le \sqrt{2(C\varepsilon + \delta^2)}\, h^{\min\{r,k+1\}}.
\end{equation}
For the second term, we have the following lemma.
\begin{lemma} \label{lem:interp_bound_2}
	Under \Cref{assupt:reg_1}, 
	\begin{equation}\label{eq:interp_bound_2}
	\mathfrak{a}_h(\bm{u}-\mathcal{R}_h \bm{u},\bm{v}_h)
	\lesssim \sqrt{C\varepsilon + \delta^2}\, h^{\min\{r,k+1\}}\|\bm{v}\|_{Q}.
	\end{equation}
\end{lemma}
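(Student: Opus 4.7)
The plan is to exploit the two defining properties of the Radau projection in \eqref{eq:Radau_proj} to annihilate three of the four contributions in $\mathfrak{a}_h(\bm{u}-\mathcal{R}_h\bm{u},\bm{v}_h)$, so that only the collision term survives and can be controlled by the approximation estimate \eqref{eq:interp_err_1d} via Cauchy--Schwarz.

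First I would split $\mathfrak{a}_h(\bm{u}-\mathcal{R}_h\bm{u},\bm{v}_h)$ according to the representation \eqref{eq:def_a} into four pieces: the interior-face sum weighted by the upwind trace, the elementwise volume pairing $-\sum_K(\bm{u}-\mathcal{R}_h\bm{u},\bOmega\cdot\nabla\bm{v}_h)_K$, the outflow boundary term $(\bm{u}-\mathcal{R}_h\bm{u},\bm{v}_h)_{\partial X_+}$, and the collision term $(Q(\bm{u}-\mathcal{R}_h\bm{u}),\bm{v}_h)$. I will then treat each ordinate $\mu_l$ independently and trace through where the Radau projection eliminates terms.

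Second, for $\mu_l\neq 0$ the outflow-matching condition \eqref{eq:Radau_proj_b} forces $u^l(x^{\mathrm{out}}_I)=\mathcal{R}_h u^l(x^{\mathrm{out}}_I)$ for every element $I$; since the upwind trace $\widehat{(u^l-\mathcal{R}_h u^l)}$ on an interior face is precisely the value at the outflow endpoint of the upwind element, the entire interior-face sum vanishes. The same identity applied to the last element in the direction of $\mu_l$ shows that $u^l-\mathcal{R}_h u^l=0$ on $\partial X_+^l$, killing the third term. For the volume integral, the observation that $\mu_l\,\partial v_h^l/\partial x\in P^{k-1}(I)$ on each element combined with the orthogonality \eqref{eq:Radau_proj_a} gives $(u^l-\mathcal{R}_h u^l,\mu_l\partial v_h^l/\partial x)_I=0$. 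Summing over $l$ with the weights $w_l$ kills the second term.

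Third, only the collision term remains, and since $Q$ is symmetric with respect to $W$ it defines an inner product; Cauchy--Schwarz together with \eqref{eq:interp_err_1d} yields
\begin{equation*}
|(Q(\bm{u}-\mathcal{R}_h\bm{u}),\bm{v}_h)|
=|(\bm{u}-\mathcal{R}_h\bm{u},\bm{v}_h)_Q|
\le \|\bm{u}-\mathcal{R}_h\bm{u}\|_Q\,\|\bm{v}_h\|_Q
\lesssim \sqrt{C\varepsilon+\delta^2}\,h^{\min\{r,k+1\}}\|\bm{v}_h\|_Q,
\end{equation*}
which is exactly \eqref{eq:interp_bound_2}. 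I expect no real obstacle: the only delicate point is bookkeeping the orientation, namely verifying that in slab geometry the upwind side of an interior face agrees with the outflow endpoint of the appropriate neighbour and that $\partial X_+^l$ is the outflow endpoint of the terminal element along $\mu_l$. This is precisely the structural reason the Radau projection removes the spurious $h^{-1}$ loss that appeared in the multi-dimensional argument of \Cref{lem:interp_bound}.
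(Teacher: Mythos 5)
Your proposal is correct and follows essentially the same route as the paper: the same four-term decomposition via \eqref{eq:def_a}, annihilation of the face, volume, and outflow terms by the two Radau properties \eqref{eq:Radau_proj_a}--\eqref{eq:Radau_proj_b}, and a Cauchy--Schwarz bound on the remaining collision term using \eqref{eq:interp_err_1d}. Your added bookkeeping of why the upwind trace coincides with the outflow endpoint is a welcome elaboration of what the paper leaves implicit, but it is not a different argument.
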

\begin{proof}
	As in \Cref{lem:interp_bound}, we write $\mathfrak{a}_h(\bm{u}-\mathcal{R}_h \bm{u},\bm{v}_h)= \mathrm{I} +\Pi + \mathrm{III} + \mathrm{IV}$,
	where
	\begin{align}
	\mathrm{I} &:=-\sum_{e\in \mathcal{E}^\mathrm{i}_h} \int_{e}(\bOmega \cdot \bm{n}_e \widehat{(\bm{u} - \mathcal{R}_h \bm{u})})^{\tran} W [\![\bm{v_h}]\!] \ud \bm{x}, \\
	\mathrm{II} &:= - \sum_K  \int_{K}  (\bm{u} - \mathcal{R}_h \bm{u})^{\tran} W (\bOmega \cdot \nabla  \bm{v}_h )  \ud\bm{x}, \\
	\mathrm{III} &:=  ( \bm{u} - \mathcal{R}_h \bm{u} , \bm{v}_h)_{\partial X_+}, \quad 
	\mathrm{IV} :=  (\bm{u} - \mathcal{R}_h \bm{u} , \bm{v}_h)_Q.
	\end{align}
	However, now because of the property \eqref{eq:Radau_proj_b}, $\mathrm{I}=\mathrm{III}=0$,
	and from \eqref{eq:Radau_proj_a}, $\mathrm{II}=0$.
	Meanwhile, the argument from \Cref{lem:interp_bound} gives
	\begin{equation}
		\mathrm{IV}\le h^{\min\{r,k+1\}}\sqrt{2(C\veps + \delta^{2})}\|\bm{v}_h\|_{Q},
	\end{equation}
	from which \eqref{eq:interp_bound_2} follows.
\end{proof}

Together \eqref{eq:interp_err_1d} and \eqref{eq:interp_bound_2} imply that
\begin{equation}
	\| \bm{u}-\bm{u}_h\|_{Q} \lesssim \sqrt{C\varepsilon + \delta^2}\, h^{\min\{r,k+1\}},
\end{equation}
from which, we have the following theorem. 
\begin{theorem}
	Under \Cref{assupt:reg_1}, for $k \geq 1$,
	\begin{equation}\label{eq:err_est_l2_1d}
	\left\|\bm{u}-\bm{u}_h\right\| \lesssim \left(\sqrt{C\varepsilon^2 + \delta^2\varepsilon} + \sqrt{C + \delta^2/\varepsilon}\right)h^{\min\{r,k+1\}}.
	\end{equation}
	Furthermore, if $\delta=0$, then
	\begin{equation}
	\left\|\bm{u}-\bm{u}_h\right\| \lesssim (\varepsilon + 1)h^{\min\{r,k+1\}}.
	\end{equation}
\end{theorem}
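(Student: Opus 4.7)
The plan is to promote the $\|\cdot\|_Q$ bound
\begin{equation*}
\|\bm{u}-\bm{u}_h\|_{Q}\lesssim \sqrt{C\varepsilon + \delta^2}\,h^{\min\{r,k+1\}},
\end{equation*}
already established in the paragraph preceding the theorem, to an $L^2$ estimate. The only structural input beyond that bound is the identity
\begin{equation*}
\|\bm{v}\|_Q^2=\frac{1}{\varepsilon}\|\sigma_{\mathrm{t}}^{1/2}(\bm{v}-\overline{\bm{v}})\|^2+\varepsilon\|\sigma_{\mathrm{a}}^{1/2}\overline{\bm{v}}\|^2
\end{equation*}
together with the orthogonal decomposition $\|\bm{v}\|^2=\|\bm{v}-\overline{\bm{v}}\|^2+\|\overline{\bm{v}}\|^2$ coming from $(\bm{v}-\overline{\bm{v}},\overline{\bm{v}})=0$ (used earlier in connection with \eqref{lem:B_stab}).

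The steps I would carry out, in order, are as follows. First, apply the $\|\cdot\|_Q$ identity with $\bm{v}=\bm{u}-\bm{u}_h$ and separately extract each of the two nonnegative pieces. Using the uniform positive lower bounds on $\sigma_{\mathrm{t}}$ and $\sigma_{\mathrm{a}}$ (the latter being explicit from \eqref{as:rte_scale_1_0}), this yields
\begin{equation*}
\|(\bm{u}-\bm{u}_h)-\overline{\bm{u}-\bm{u}_h}\|^2\lesssim \varepsilon\,\|\bm{u}-\bm{u}_h\|_Q^2\lesssim (C\varepsilon^2+\delta^2\varepsilon)\,h^{2\min\{r,k+1\}}
\end{equation*}
and
\begin{equation*}
\|\overline{\bm{u}-\bm{u}_h}\|^2\lesssim \frac{1}{\varepsilon}\,\|\bm{u}-\bm{u}_h\|_Q^2\lesssim \Bigl(C+\tfrac{\delta^2}{\varepsilon}\Bigr)h^{2\min\{r,k+1\}}.
\end{equation*}
Second, invoke the orthogonal decomposition to add the two estimates, giving
\begin{equation*}
\|\bm{u}-\bm{u}_h\|^2\lesssim \bigl(C\varepsilon^2+\delta^2\varepsilon+C+\delta^2/\varepsilon\bigr)\,h^{2\min\{r,k+1\}}.
\end{equation*}
Third, apply $\sqrt{a+b}\le \sqrt{a}+\sqrt{b}$ to recover \eqref{eq:err_est_l2_1d}. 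The isotropic case $\delta=0$ is then immediate: the two summands collapse to $\sqrt{C}\,\varepsilon$ and $\sqrt{C}$, whose sum is bounded by $(\varepsilon+1)h^{\min\{r,k+1\}}$.

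No step here is difficult; the theorem is essentially a bookkeeping corollary of the $Q$-norm estimate, and the proof amounts to unpacking the definition of $\|\cdot\|_Q$. The only mild subtlety is the implicit dependence on a positive lower bound for $\sigma_{\mathrm{t}}$, but this is the same structural constant already absorbed into the $\lesssim$ notation throughout Sections \ref{sec:settings} and \ref{sec:asym_error_analysis} and does not affect the $\varepsilon$-scaling of the result. In particular, no new orthogonality, integration-by-parts, or interpolation arguments are needed beyond what was used to obtain the $Q$-norm bound, so the optimal rate $h^{\min\{r,k+1\}}$ transfers directly to the $L^2$ estimate.
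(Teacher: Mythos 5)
Your proof is correct and is essentially the paper's own argument: the paper derives the theorem directly from the $Q$-norm bound $\|\bm{u}-\bm{u}_h\|_Q\lesssim\sqrt{C\varepsilon+\delta^2}\,h^{\min\{r,k+1\}}$ by unpacking $\|\cdot\|_Q$ into its $\frac{1}{\varepsilon}\|\cdot\|^2$ and $\varepsilon\|\cdot\|^2$ components, exactly as you do. Your remark about the implicit uniform lower bound on $\sigma_{\mathrm{t}}$ is apt but consistent with how the paper absorbs that constant into $\lesssim$ throughout (e.g., already in \Cref{cor:est}).
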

Thus we obtain the optimal convergence order of $k+1$ for $k\ge 1$ when the boundary condition is isotropic.

\subsection{Anisotropic boundary conditions in one-dimension parallel slab geometry case}\label{subsec:aniso_bc}
When the boundary condition $\bm{\alpha}$ is anisotropic, we use the blended approach proposed in \cite{M2013}, which combines the given kinetic boundary condition with the isotropic boundary condition that results in the diffusion limit.  This condition is undertstood as the leading order approximation of the kinetic distribution after the boundary layer transition \cite{habetler1975uniform}. 

Let $\bm{v}$ solve
\begin{subequations}\label{eq:rte_scale_compact_v}
	\begin{align}
		\bomega\cdot\nabla\bm{v} + Q\bm{v} &= \varepsilon\bm{f},\quad \text{ in } I\\
		\bm{v}&=\lambda\bm{\alpha}(\bx,\bomega)+(1-\lambda)\bm{\alpha}_{\mathrm{b}}(\bx),\; \text{ on } \begin{bmatrix}\partial I_-^1 & \partial I_-^2 &\cdots & \partial I_-^L\end{bmatrix}^{\tran},
	\end{align}
\end{subequations}
where $\bm{\alpha}(\bx,\bomega)=\begin{bmatrix}\alpha^1 & \alpha^2 & \cdots & \alpha^L\end{bmatrix}^\tran$, $\lambda$ is a parameter which will be determined later, and $\bm{\alpha}_{\mathrm{b}}(\bx)$ is the boundary condition for the diffusion limit, whose $l$th component $[\bm{\alpha}_{\mathrm{b}}]_l$ is given by \cite{GJL1999}:
\begin{equation}\label{eq:boundary_corrector}
[\bm{\alpha}_{\mathrm{b}}]_l=\begin{cases}
\int_{0}^{\pi/2}W(\cos(\theta)) \alpha_l(\theta)\sin(\theta)\ud\theta, & \text{ if } \mu^l>0, x=a, \\
\int_{-\pi/2}^{0}W(\cos(\theta)) \alpha_r(\theta)\sin(\theta)\ud\theta, & \text{ if } \mu^l<0, x=b.
\end{cases}
\end{equation}
Here $W(\mu)=\frac{\sqrt{3}}{2}\mu H(\mu)$ is defined in terms of Chandrasekhar's H-function for isotropic scattering in a conservative medium \cite{MP1991}. 

The boundary condition for $\bm{v}$ is a convex combination of the original anisotropic boundary condition and an isotropic boundary correction.  Based on the decay of the boundary layer, $\lambda$ was set to $1-\exp(-\sigma_{\mathrm{t}}/\varepsilon)$ in   \cite{M2013} so that $\lambda\approx 1$ when $\varepsilon\ll 1$ and $\lambda\approx 0$ when $\sigma_{\mathrm{t}}\approx 0$. Here we instead choose $\lambda$ based on a balance between discretization and boundary layer errors.
To this end, we  further decompose the boundary condition as (see \Cref{lem:pri_1})
$\bm{\alpha}(\bx,\bomega)=\bm{\alpha}_0(\bx)+\bm{\alpha}_1(\bx,\bomega)$.
We choose $\bm{\alpha}_0(\bx)=\bm{\alpha}_{\mathrm{b}}(\bx)$. 
Then the boundary condition for $\bm{v}$ can be rewritten as
\begin{equation}
	 \bm{v}=\bm{\alpha}_{\mathrm{b}}(\bx) + \lambda\bm{\alpha}_1(\bx,\bomega),\quad \text{ on } \partial X_-.
\end{equation}
We consider $\bm{v}_h$ as the numerical approximation of $\bm{u}$ and analyze the error between $\bm{u}$ and $\bm{v}_h$, which, by triangle inequality, can be decomposed into
\begin{equation}
	 \|\bm{u}-\bm{v}_h\| \le \|\bm{u}-\bm{v}\|+\|\bm{v}-\bm{v}_h\|.
\end{equation}
To estimate $\|\bm{u}-\bm{v}\|$, set $\bm{e}:= \bm{u}-\bm{v}$. Then $\bm{e}$ satisfies
\begin{subequations}\label{eq:rte_scale_compact_e}
	\begin{align}
		\bomega\cdot\nabla\bm{e} + Q\bm{e} &= 0,\quad \text{ in } X\\
		\bm{e}&=(1-\lambda)(\bm{\alpha}(\bx,\bomega)-\bm{\alpha}_{\mathrm{b}}(\bx)),\quad \text{ on } \partial X_-.
	\end{align}
\end{subequations}
According to \cite[Theorem B.1]{GJL1999},
\begin{equation}
	 |e_l|\lesssim \frac{1-\lambda}{2-\mu_l}\delta_\infty\exp\left({-\frac{x}{2\varepsilon}}\right),
\end{equation}
where $\delta_\infty:=\max_{(\bx,\bomega)\in \partial X_-}|\bm{\alpha}(\bx,\bomega)-\bm{\alpha}_{\mathrm{b}}(\bx)|$.
Therefore,
\begin{equation}\label{eq:1d_part_I}
	 \|\bm{u}-\bm{v}\|=\|\bm{e}\| \lesssim (1-\lambda)\delta_\infty\sqrt{\varepsilon}.
\end{equation}
To estimate $\|\bm{v}-\bm{v}_h\|$, 
using \eqref{eq:err_est_l2_1d} and noting that $\|\bm{\alpha}_1(\bx,\bomega)\|_{\partial X_-}=\delta$, we deduce that
\begin{equation}\label{eq:1d_part_II}
	\left\|\bm{v}-\bm{v}_h\right\|
	\lesssim h^{\min\{r,k+1\}} \Big((C\varepsilon^2 + \lambda^2\delta^2\varepsilon)^{1/2}
	+ (C + \lambda^2\delta^2/\varepsilon)^{1/2}\Big).
\end{equation}
Combining \eqref{eq:1d_part_I} and \eqref{eq:1d_part_II} and noting that $\delta \lesssim \delta_\infty$ give
%
\begin{align}\label{eq:est_total_aniso}
	\|\bm{u}-\bm{v}_h\|
	 &\lesssim (1-\lambda)\delta_\infty\sqrt{\varepsilon} 
	 + h^{q} (C\varepsilon^2 + \lambda^2\delta_\infty^2\varepsilon)^{1/2}
	+h^{q} (C + \lambda^2\delta_\infty^2/\varepsilon)^{1/2}\nonumber\\
	&\lesssim (1-\lambda)^2\delta_\infty^2\varepsilon + h^{2k+2}\left(C + \lambda^2\delta_\infty^2/\varepsilon\right),
\end{align}
where $q=\min\{r,k+1\}$.
This bound has a minimum with respect to $\lambda$ when
\begin{equation}\label{eq:lambda_min}
	\lambda^* = \frac{\varepsilon^2}{\varepsilon^2 + \beta},\quad \beta:= h^{2q}.
\end{equation}
If $h/\varepsilon\to 0$, then $\lambda\to 1$, which is expected since in this case the boundary layer will be fully resolved. Substituting \eqref{eq:lambda_min} into \eqref{eq:est_total_aniso} and keeping the dominant terms in $\varepsilon$, we have
\begin{align*}
	\|\bm{u}-\bm{v}_h\|
	&\lesssim \left(1-\frac{\varepsilon^2}{\varepsilon^2 + \beta}\right)\delta_\infty\sqrt{\varepsilon} + \sqrt{\beta}\left(C+\left(\frac{\varepsilon^2}{\varepsilon^2 + \beta}\right)^2 \frac{\delta_\infty^2}{\varepsilon}\right)^{1/2}\\
	&\lesssim \left(\frac{\beta}{\varepsilon^2 + \beta}\right)\delta_\infty\sqrt{\varepsilon} + \sqrt{\beta}\left(\sqrt{C} + \frac{\varepsilon^{3/2}\delta_\infty}{\varepsilon^2 + \beta} \right)\\
	&= \sqrt{\beta}\sqrt{C}+\left(\frac{\beta}{\varepsilon^2 + \beta}\right)\delta_\infty\sqrt{\varepsilon} + \delta_\infty\sqrt{\varepsilon}\frac{\varepsilon\sqrt{\beta}}{\varepsilon^2 + \beta},
\end{align*}
from which we have a uniform convergence with respect to $\varepsilon$. In fact, $\left(\frac{\beta}{\varepsilon^2 + \beta}\right)\sqrt{\varepsilon}$ has a maximum $\frac{1}{2}\beta^{1/4}$ at $\beta=\varepsilon^2$, and $\sqrt{\varepsilon}\frac{\varepsilon\sqrt{\beta}}{\varepsilon^2 + \beta}$ has a maximum $\frac{1}{4}\beta^{1/4}$ when $\beta=\frac{1}{3}\varepsilon^2$. Hence, in the worst-case scenario, when $\beta=O(\varepsilon^2)$, we have
\begin{equation}\label{eq:1d_est_wrst}
	 \|\bm{u}-\bm{v}_h\|\lesssim O(\beta^{\frac{1}{2}})+\delta_\infty O(\beta^{\frac{1}{4}})\approx O(h^{k}) + \delta_\infty O(h^{\frac{k}{2}}).
\end{equation}

\section{Numerical results}\label{sec:num_results}
In this section, we present some numerical results.
We focus only on the one-dimension slab geometry problems \eqref{eq:rte_scale_1d_all}, since we can also investigate the analysis of the boundary layer effect.

In all numerical examples, the computation domain is $X=(-1.0, 1.0)$. Let $\mathcal{T}_0=\mathcal{T}_{h_0}$ be an initial triangulation of $X$ with $8$ equal elements each of which has a mesh size $h_0 = 2/8$.
Then we recursively generate nested interval cells $\mathcal{T}_j=\mathcal{T}_{h_j}$, $j=1,2,3,\cdots$,  by dividing each interval cell in the previous mesh $\mathcal{T}_{j-1}$ into two equal sub-intervals. The boundary conditions are specified by $\alpha_{\mathrm{l}}(\mu)$ for $\mu>0$ and $x=-1$ and by $\alpha_{\mathrm{r}}(\mu)$ for $\mu<0$ and $x=1$, respectively. A GMRES solver with a diffusion synthetic accelerator (DSA)  \cite{CCGH2017,Larsen2010} is employed to solve the discrete-ordinate equations.

\begin{example}\label{ex:1}
We take $\sigma_{\mathrm{t}}=2$ and $\sigma_{\mathrm{a}}=1$. The source function $f= f_{\mathrm{bump}}$ where $f_{\mathrm{bump}}$ is a mollifier bump function with support radius $r=0.125$, i.e.,
\begin{equation*}
	f_{\mathrm{bump}} = \begin{cases}
	\exp \left(\frac{1}{(x/r)^2-1}\right), &\text{ if } |x|<r, \\
	0, &\text{ if } |x|\ge r,
	\end{cases}
\end{equation*}
and $\alpha_{\mathrm{l}}(\mu)=0.1$ and $\alpha_{\mathrm{r}}(\mu)=0$.  We use the solution on mesh $T_{8}$ ($16384$ cells) as the reference (true) solution and compute the errors for solutions obtained on the coarse meshes.
The results using linear elements are reported in \Cref{tab:AP_1D_1}, which confirm our analysis.

\begin{table}[!htbp]
\begin{center}
\caption{Error and Convergence rate for $k=1$ and $k=2$ for Example~\ref{ex:1}}\label{tab:AP_1D_1}
\begin{tabular*}{0.98\textwidth}{@{\extracolsep{\fill} } c | c c | c c | c c }
\toprule[1pt]
    & \multicolumn{6}{ c }{$k=1$}\\
\midrule[0.5pt]
    & \multicolumn{2}{ c | }{$\varepsilon=10^0$} & \multicolumn{2}{ c | }{$\varepsilon=10^{-3}$} & \multicolumn{2}{ c }{$\varepsilon=10^{-5}$} \\
\midrule[0.5pt]
$h$ &$\|\bm{u}^*_{h_1}-\bm{u}\|$ & rate &$\|\bm{u}^*_{h_2}-\bm{u}\|$ & rate &$\|\bm{u}^*_{h_2}-\bm{u}\|$ & rate\\
\midrule[1pt]
$2/2^3$             &1.93e-01 &/    &4.00e-02 &/    &4.02e-02 &/    \\
\midrule[0.5pt]
$2/2^4$             &6.26e-02 &1.62 &9.52e-03 &2.07 &9.65e-03 &2.06 \\
\midrule[0.5pt]
$2/2^5$             &2.08e-02 &1.59 &3.28e-03 &1.54 &3.35e-03 &1.52 \\
\midrule[0.5pt]
$2/2^6$             &6.12e-03 &1.76 &7.44e-04 &2.14 &7.82e-04 &2.10 \\
\midrule[0.5pt]
$2/2^7$             &1.56e-03 &1.97 &1.60e-04 &2.22 &1.76e-04 &2.15 \\
\midrule[0.5pt]
$2/2^8$             &4.02e-04 &1.96 &3.66e-05 &2.12 &4.39e-05 &2.01 \\
\midrule[1.0pt]
 $h$   & \multicolumn{6}{ c }{$k=2$}\\
\midrule[0.5pt]
$2/2^3$             &8.52e-02 &/    &1.40e-02 &/    &1.41e-02 &/ \\
\midrule[0.5pt]
$2/2^4$             &3.70e-02 &1.20 &3.44e-03 &2.03 &3.46e-03 &2.02 \\
\midrule[0.5pt]
$2/2^5$             &9.82e-03 &1.91 &5.52e-04 &2.64 &5.55e-04 &2.64 \\
\midrule[0.5pt]
$2/2^6$             &9.75e-04 &3.33 &2.50e-05 &4.46 &2.62e-05 &4.41 \\
\midrule[0.5pt]
$2/2^7$             &1.15e-04 &3.09 &3.15e-06 &2.99 &3.42e-06 &2.94 \\
\midrule[0.5pt]
$2/2^8$             &1.65e-05 &2.80 &5.18e-07 &2.60 &3.82e-07 &3.16 \\
\bottomrule[1pt]
\end{tabular*}
\end{center}
\end{table}

\end{example}

\begin{example}\label{ex:2}
The following example is mainly to illustrate the performance of the blended boundary condition when the true boundary condition is anisotropic. We set $\alpha_{\mathrm{l}}(\mu)=0.1+\mu/100$, $\alpha_{\mathrm{r}}(\mu)=0$, $k=1$, and $h_0=2/32$.  The other settings remain the  same as in Example~\ref{ex:1}. From the definition $\alpha_{\mathrm{b}}(x)$ in \eqref{eq:boundary_corrector}, 
\begin{equation*}
    \alpha_{\mathrm{b}}(x)|_{x=-1}=\int_{0}^{1}W(\mu) \alpha_l(\mu)\ud\mu\\
     \approx 0.10710446089598763,
\end{equation*}
where we use the fact that $\int_{0}^{1}W(\mu)\ud\mu=1$ and $\int_{0}^{1}\mu W(\mu) \ud\mu \approx 0.710446089598763$ (see \cite{MP1991}).

\begin{table}[!htbp]
	\begin{center}
		\caption{Error, $\lambda_{\mathrm{min}}$, $\lambda^*$, and Convergence rate for $\varepsilon=10^{-1}$ for Example~\ref{ex:2}}\label{tab:AP_1D_2_1}
		\begin{tabular*}{0.85\textwidth}{@{\extracolsep{\fill} } c | c c c | c c c}
		\toprule[1pt]
		$h$ &$\|\bm{u}^{\mathrm{min}}_{h}-\bm{u}\|$ & $\lambda_{\mathrm{min}}$ & rate & $\|\bm{u}^*_{h}-\bm{u}\|$ & $\lambda^*$ & rate \\
		\midrule[1pt]
		$2/2^6$             &5.93016e-04 & 1.0	&/  &5.93016e-04 & 0.99999 &/	\\
		\midrule[0.5pt]
		$2/2^7$             &1.52914e-04 & 1.0	& 1.96 &1.52914e-04	& 0.99999 &1.96  \\
		\midrule[0.5pt]
		$2/2^8$             &4.63630e-05 & 1.0	& 1.72 &4.63630e-05	& 0.9999996 &1.72	\\
		\midrule[0.5pt]
		$2/2^9$             &1.62808e-05 & 1.0	& 1.51 &1.62808e-05	& 0.99999998 &1.51	\\
		\bottomrule[1pt]
		\end{tabular*}
	\end{center}
\end{table}

\begin{table}[!htbp]
	\begin{center}
		\caption{Error, $\lambda_{\mathrm{min}}$, $\lambda^*$, and Convergence rate for $\varepsilon=10^{-2}$ for Example~\ref{ex:2}}\label{tab:AP_1D_2_2}
		\begin{tabular*}{0.85\textwidth}{@{\extracolsep{\fill} } c | c c c | c c c }
		\toprule[1pt]
		$h$ &$\|\bm{u}^{\mathrm{min}}_{h}-\bm{u}\|$ & $\lambda_{\mathrm{min}}$ & rate & $\|\bm{u}^*_{h}-\bm{u}\|$ & $\lambda^*$ & rate \\
		\midrule[1pt]
		$2/2^6$             &5.76542e-04 & 0.76 &/ &5.76866e-04	 & 0.991 &/	  \\
		\midrule[0.5pt]
		$2/2^7$             &1.26336e-04 & 0.9 &2.19 &1.26657e-04 & 0.9994 &2.19	 \\
		\midrule[0.5pt]
		$2/2^8$             &4.66157e-05 & 0.98 &1.44 &4.66666e-05 & 0.99996 &1.44	 \\
		\midrule[0.5pt]
		$2/2^9$             &2.39469e-05 & 0.999 &0.96 &2.39472e-05 & 0.999998 &0.96	 \\
		\bottomrule[1pt]
		\end{tabular*}
	\end{center}
\end{table}

\begin{table}[!htbp]
	\begin{center}
		\caption{Error, $\lambda_{\mathrm{min}}$, $\lambda^*$, and convergence rate for $\varepsilon=10^{-3}$ for Example~\ref{ex:2}}\label{tab:AP_1D_2_3}
		\begin{tabular*}{0.85\textwidth}{@{\extracolsep{\fill} } c | c c c | c c c }
		\toprule[1pt]
		$h$ &$\|\bm{u}^{\mathrm{min}}_{h}-\bm{u}\|$ & $\lambda_{\mathrm{min}}$ & rate & $\|\bm{u}^*_{h}-\bm{u}\|$ & $\lambda^*$ & rate \\
		\midrule[1pt]
		$2/2^6$             &7.45808e-04 & 0.36 &/ &7.45870e-04	& 0.51186 &/	 \\
		\midrule[0.5pt]
		$2/2^7$             &1.62312e-04 & 0.24 &2.20 &1.65729e-04 & 0.943748 &2.17	 \\
		\midrule[0.5pt]
		$2/2^8$             &4.62701e-05 & 0.32 &1.81 &5.23629e-05 & 0.996289 &1.66	 \\
		\midrule[0.5pt]
		$2/2^9$             &2.69756e-05 & 0.53 &0.78 &3.05380e-05 & 0.999767 &0.78	 \\
		\bottomrule[1pt]
		\end{tabular*}
	\end{center}
\end{table}

For a fixed $\varepsilon$, we employ the solution of \eqref{eq:rte_scale_compact_v} with $\lambda=1$ on mesh $\mathcal{T}_{8}$ as the reference (true) solution $\bm{u}$ (since the mesh is sufficiently fine to resolve the solution in the boundary layer) and compute the errors for solutions $\bm{u}_{h}$ obtained on the coarse meshes with different values of $\lambda$. 
In fact, for a fix value of $\varepsilon$ and $h$, the error is approximately a quadratic function of $\lambda$ with a minimum at $\lambda_{\mathrm{min}}$ in $(0,1)$. 
We show the error for $\bm{u}^{\mathrm{min}}_{h}$ which minimizes $\|\bm{u}_{h}-\bm{u}\|$ the value of  $\lambda_{\mathrm{min}}$ that achieves it by sweeping different values of $\lambda$. Sufficiently many values of $\lambda$ are employed to guarantee that the errors with the chosen value $\lambda_{\mathrm{min}}$ are close enough to the true minimum errors for any $\lambda \in [0,1]$. 
For comparison, the errors between $\bm{u}$ and $\bm{u}^{*}_{h}$ which corresponds to $\lambda=\lambda^*$ defined in \eqref{eq:lambda_min} are also provided. 
The results are reported in 
\cref{tab:AP_1D_2_1,tab:AP_1D_2_2,tab:AP_1D_2_3}. 
It is noticeable that, since many approximations and simplifications are applied in the derivation of \eqref{eq:lambda_min}, $\lambda^*$ may not be close to $\lambda_{\mathrm{min}}$. However, uniform convergence can still be observed from the numerical results, with convergence rates are consistent with the theoretical estimate \eqref{eq:1d_est_wrst}.  (The estimated rate in this case is $h^{1/2}$.) This fact is important, since in practice one can choose $\lambda^*$, but not $\lambda_{\mathrm{min}}$.

\end{example}

\section{Conclusion}\label{sec:conclusion}
In this paper, we analyze the convergence of a discontinuous Galerkin scheme for the scaled discrete-ordinate radiative transfer equation with isotropic scattering kernel.  For sufficiently rich approximation spaces we prove uniform convergence rates with respect to $\varepsilon$ when the boundary is isotropic.  However,  this rate is not quite optimal.  In slab geometries with isotropic boundary conditions, we can obtain optimal and uniform convergence rate.  For anisotropic boundary conditions, we propose to solve an auxiliary problem and analyze the error between the numerical solutions of the auxiliary problem and the original. We show that by properly choosing the parameter in the auxiliary problem,  uniform convergence can  be achieved. Some numerical results are presented to demonstrate how these errors behave in practice.

\section*{Acknowledgments}
The authors would like to thank Michael Crockatt for providing his code for solving one-dimensional parallel slab problems.

\ifx\SIAM\TRUE
\bibliographystyle{siamplain}
\bibliography{uni_conv_DG_RTE}
\else
\bibliographystyle{amsplain}
\bibliography{uni_conv_DG_RTE}
\fi

\end{document}